\documentclass[12pt, a4paper]{article}
\usepackage{float}
\usepackage{titlesec} 
\usepackage{geometry}
\usepackage{multirow}
\usepackage{enumerate}
\usepackage{booktabs}
\usepackage{ytableau}
\usepackage{tikz}
\usepackage{graphicx}
\usepackage{caption}
\usepackage{subcaption}
\usepackage{mathrsfs}
\usepackage{makecell}
\usepackage{algorithm,algorithmic}
\usepackage{lmodern}
\usepackage{amssymb}
\usepackage{amsfonts}
\usepackage{epstopdf}
\usepackage{setspace}
\usepackage{appendix}
\usepackage{amsmath}
\usepackage{amsthm}
\usepackage[
    bookmarks=false,
    unicode=true,
    colorlinks=true,
    linkcolor=blue,
    anchorcolor=blue,
    citecolor=green
]{hyperref}

\usepackage[running]{lineno}

\def\fix{\mathrm{fix}}

\usepackage{authblk}

\usepackage{xcolor}
\usepackage[normalem]{ulem}

\allowdisplaybreaks[4]

\newtheorem{thm}{Theorem}[section]
\newtheorem{lem}[thm]{Lemma}

\newtheorem{cor}[thm]{Corollary}
\newtheorem{prob}[thm]{Problem}

\theoremstyle{definition}
\newtheorem{def1}{Definition}[section]

\title{The Aldous property for normal Cayley graphs on symmetric groups}

\author[1]{Chenhui Lv\thanks{Corresponding author.}}
\author[2]{Sanming Zhou}
\affil[1]{\small School of Mathematical Sciences, University of Science and Technology of China, Hefei 230026, People's Republic of China}
\affil[2]{\small School of Mathematics and Statistics, The University of Melbourne, Parkville, VIC 3010, Australia}

\begin{document}

\maketitle
\openup 0.6 \jot 

\renewcommand{\thefootnote}{\empty}
\footnotetext{E-mail addresses: lch1994@mail.ustc.edu.cn (Chenhui Lv),  sanming@unimelb.edu.au (Sanming Zhou)}

\begin{abstract}
Aldous' spectral gap conjecture states that the random walk and the interchange process on any connected graph have the same spectral gap, or, equivalently, the second largest eigenvalue of any connected Cayley graph on the symmetric group $S_n$ with respect to a set of transpositions is achieved by the standard representation of $S_n$. This celebrated conjecture, proved in its general form in 2010, has inspired much interest in searching for other Cayley graphs on $S_n$ possessing this property, now known as the Aldous property. In this paper, we first prove that for $n \ge 5$ at most one of a normal Cayley graph on $S_n$ and its complement can possess the Aldous property except when these two graphs are $2K_{n!/2}$ and $K_{n!/2,n!/2}$ respectively. We then determine, for sufficiently large $n$, all normal Cayley graphs $\mathrm{Cay}(S_n, S)$ that have the Aldous property, except for the case when $S$ contains a permutation with support size in $\{2, 3, \dots, n-2\}$ and a permutation with support size in $\{n-1, n\}$, but not all permutations with support size $n$ are contained in $S$. In particular, we show that a non-complete normal Cayley graph $\mathrm{Cay}(S_n, S)$ does not have the Aldous property if all permutations in $S$ have support size $n-1$ or $n$, or all permutations with support size $n$ are contained in $S$, thereby solving an open problem posed by Li, Xia and Zhou in 2023. Along the way we determine all normal Cayley graphs on $S_n$ that are line graphs, and classify all normal Cayley graphs on $S_n$ with the strictly second largest eigenvalue at most $1$.
\end{abstract}

\vspace{1em}
\noindent\textbf{Keywords:} Aldous' spectral gap conjecture, Cayley graph, symmetric group, second largest eigenvalue, line graph

\vspace{1em}
\noindent\textbf{Mathematics Subject Classification (2020):} 05C50, 05C25, 05C81

\section{Introduction}\label{sec:intro}

All graphs considered in this paper are finite and undirected with no loops or parallel edges. The order of a graph is meant to be its number of vertices. Denote by $A(\Gamma)$ the adjacency matrix of a graph $\Gamma$, and call its eigenvalues the \emph{eigenvalues} of $\Gamma$. Since $A(\Gamma)$ is a real symmetric matrix, the eigenvalues of $\Gamma$ are real numbers, and we order them as $\lambda_1(\Gamma) \geq \lambda_2(\Gamma) \geq \dots \geq \lambda_n(\Gamma)$, where $n$ is the order of $\Gamma$. Similarly, for a real symmetric matrix $M$, we write $\lambda_i(M)$ for its $i$-th largest eigenvalue. The \emph{spectrum} of $\Gamma$, denoted by $\mathrm{Spec}(\Gamma)$, is the multiset of eigenvalues of $\Gamma$. The difference $\lambda_1(\Gamma) - \lambda_2(\Gamma)$ is called the \emph{spectral gap} of $\Gamma$. If  $\lambda_1(\Gamma)$ has multiplicity $t$, then we call $\lambda_{t+1}(\Gamma)$ the \emph{strictly second largest eigenvalue} of $\Gamma$. In the case when $\Gamma$ is $d$-regular, we have $\lambda_1(\Gamma) = d$ with multiplicity the number of components of $\Gamma$, and the spectral gap $d - \lambda_2(\Gamma)$ equals the second smallest Laplacian eigenvalue of $\Gamma$.

Let $G$ be a finite group with identity element $e$, and let $S$ be an inverse-closed subset of $G \setminus \{e\}$. The \emph{Cayley graph} on $G$ with respect to the \emph{connection set} $S$, denoted by $\mathrm{Cay}(G, S)$, is the $|S|$-regular graph with vertex set $G$ and edge set $\{\{g, gs\}: g \in G, s \in S\}$. It is readily seen that $\mathrm{Cay}(G, S)$ has $t = [G:\langle S \rangle]$ connected components, so its strictly second largest eigenvalue is $\lambda_{t+1}(\mathrm{Cay}(G, S))$, where $[G:\langle S \rangle]$ is the index in $G$ of the subgroup $\langle S \rangle$ generated by $S$. We call $\mathrm{Cay}(G, S)$ \emph{normal} if $S$ is closed under conjugation (that is, $S$ is the union of some conjugacy classes of $G$) and \emph{nonnormal} otherwise.

\subsection{Aldous' spectral gap conjecture}

Given a graph $\Gamma$ on $[n] = \{1, 2, \ldots, n\}$, we may identify each edge $\{i,j\}$ of $\Gamma$ with the transposition $(i\ j)$ of the symmetric group $S_n$ on $[n]$. Let $T(\Gamma)$ be the set of such transpositions of $S_n$. Then $\mathrm{Cay}(S_n, T(\Gamma))$ is a Cayley graph on $S_n$ associated with $\Gamma$. It is readily seen that $\mathrm{Cay}(S_n, T(\Gamma))$ is connected whenever $\Gamma$ is connected. Around 1992, Aldous \cite{A1} formulated a stunning conjecture, which is widely known as \emph{Aldous' spectral gap conjecture}. It asserts that any connected graph $\Gamma$ and its associated Cayley graph $\mathrm{Cay}(S_n, T(\Gamma))$ have the same spectral gap. In other words, the random walk and the interchange process on any connected graph have the same spectral gap. When $\Gamma$ is a complete graph \cite{DS} or a star \cite{FOW}, this was proved before Aldous made his conjecture, and when $\Gamma$ is a complete multipartite graph the conjecture was proved in \cite{C}. After a series of partial successes (see \cite{C3} and the references therein), Aldous' spectral gap conjecture was finally proved by Caputo, Liggett and Richthammer \cite{CLR} in 2010. In fact, what they proved is a more general version of Aldous' spectral gap conjecture, namely when each edge of $\Gamma$ has a positive weight which carries forward to corresponding edges of $\mathrm{Cay}(S_n, T(\Gamma))$. See \cite{CLR, C2, C3} and \cite[Section 8.1]{LZ} for details. In this paper we confine ourselves to the unweighted case.

Aldous' spectral gap conjecture can be stated in terms of representations of symmetric groups. Let $G$ be a group, and let $\widehat{G}=\{\rho_1,\rho_2,\ldots,\rho_k\}$ denote a complete set of inequivalent complex irreducible matrix representations of $G$, with the assumption that $\rho_1$ is the trivial representation. For each $\rho_i \in \widehat{G}$, let $\chi_i$ be the character of $\rho_i$, and let $\tilde{\chi}_i(g):=\chi_i(g)/\chi_i(e)$
be the normalized character of $\rho_i$ at $g\in G$, where $\chi_i(e) = \dim\rho_i$ is the dimension of $\rho_i$. It is known \cite{MS} that the adjacency matrix of any Cayley graph $\mathrm{Cay}(G, S)$ on $G$ is equal to $\sum_{s\in S} R_{\mathrm{reg}}(s)$, where $R_{\mathrm{reg}}$ is the right regular representation of $G$ and $R_{\mathrm{reg}}(s)$ is the permutation matrix representing right multiplication by $s$ on $G$. Set
$\rho_i(S) := \sum_{s \in S} \rho_i(s)$,
and denote by $\oplus$ the direct sum of matrices. The adjacency matrix of $\mathrm{Cay}(G,S)$ is similar to
$d_1\rho_1(S)\oplus d_2\rho_2(S)\oplus \cdots\oplus d_k\rho_k(S)$
(see \cite[Proposition 7.1]{MS}), where $d_i=\dim\rho_i$ and $d_i\rho_i(S)$ denotes the direct sum of $d_i$ copies of $\rho_i(S)$.
This implies that the spectrum of $\mathrm{Cay}(G,S)$ is the union of $d_i$ copies of the multiset of eigenvalues of $\rho_i(S)$, for $1\leq i\leq k$. We say that an eigenvalue $\mu$ of $\mathrm{Cay}(G,S)$ is \emph{attained} \cite{LXZ} by the representation $\rho_i$ of $G$ if $\mu=\lambda_1(\rho_i(S))$. In particular, the strictly second largest eigenvalue $\lambda_{t+1}(\mathrm{Cay}(G,S))$ of $\mathrm{Cay}(G,S)$ (where $t = [G:\langle S \rangle]$) is \emph{attained} by $\rho_i$ if
$\lambda_{t+1}(\mathrm{Cay}(G,S))=\lambda_1(\rho_i(S))$.
If $\mathrm{Cay}(G,S)$ is normal, then Schur's lemma implies that each $\rho_i(S)$ is a scalar matrix, and the eigenvalues of $\mathrm{Cay}(G,S)$ are given by (\cite{DS,Z})
\begin{equation}
\label{eq:muj}
\mu_j=\frac{1}{\chi_j(e)}\sum_{s\in S}\chi_j(s)=\sum_{s\in S}\tilde{\chi}_j(s),\quad j=1,2,\ldots,k.
\end{equation}
Moreover, the multiplicity of $\mu_j$ is equal to $\sum_{1\leq i\leq k,\, \mu_i=\mu_j}\chi_i(e)^2$. Thus, if $\mathrm{Cay}(G,S)$ is normal, then an eigenvalue $\mu$ of $\mathrm{Cay}(G,S)$ is attained by $\rho_i$ if and only if $\mu = \sum_{s\in S} \tilde{\chi}_i(s)$, and in particular the strictly second largest eigenvalue of $\mathrm{Cay}(G,S)$ is attained by $\rho_i$ if and only if $\lambda_{t+1}(\mathrm{Cay}(G,S))=\sum_{s\in S} \tilde{\chi}_i(s)$.

It is known \cite{Sagan} that each partition of $n$ gives rise to an irreducible representation of $S_n$, known as a Specht module, and such Specht modules form a complete list $\widehat{S_n}$ of inequivalent irreducible representations of $S_n$. The Specht modules corresponding to the partitions $(n-1,1)$ and $(1^n)$ of $n$ are called the \emph{standard representation} and \emph{sign representation} of $S_n$ and are denoted by $\rho_{(n-1,1)}$ and $\rho_{(1^n)}$, respectively. As explained in \cite{C,C2,PP}, Aldous' spectral gap conjecture is equivalent to saying that for any connected graph $\Gamma$ with vertex set $[n]$ the second largest eigenvalue of $\mathrm{Cay}(S_n, T(\Gamma))$ is attained by $\rho_{(n-1,1)}$; that is, $\lambda_{2}(\mathrm{Cay}(S_n, T(\Gamma))) = \lambda_1(\rho_{(n-1,1)}(T(\Gamma)))$, or $\lambda_{2}(\mathrm{Cay}(S_n, T(\Gamma))) = \sum_{\sigma \in T(\Gamma)} \tilde{\chi}_{(n-1,1)}(\sigma)$ when $\mathrm{Cay}(S_n, T(\Gamma))$ is normal. This version of Aldous' spectral gap conjecture motivated the following notion for generalizing the conjecture (see \cite{AKP,C3,K,PP} for other attempts to generalize Aldous' spectral gap conjecture). 

\begin{def1}
[{\cite[Definition 1.1]{LXZ}}] 
A Cayley graph $\mathrm{Cay}(S_n, S)$ on $S_n$ is said to have the \emph{Aldous property} if its strictly second largest eigenvalue is attained by the standard representation of $S_n$, that is, $\lambda_{t+1}(\mathrm{Cay}(S_n, S)) = \lambda_1(\rho_{(n-1,1)}(S))$, where $t = [S_n:\langle S \rangle]$.  
\end{def1}

In particular, if $\mathrm{Cay}(S_n, S)$ is normal, then it has the Aldous property if and only if $\lambda_{t+1}(\mathrm{Cay}(S_n, S)) = \sum_{\sigma \in S} \tilde{\chi}_{(n-1,1)}(\sigma)$. 

It would be natural to study the following problem. 

\begin{prob}
\label{prob}
Determine all Cayley graphs on $S_n$ that have the Aldous property.
\end{prob}

In this paper we study this problem with a focus on normal Cayley graphs on $S_n$. Before discussing our results, let us give a brief account of recent progress on Problem \ref{prob}. As usual, for a permutation $\sigma\in S_n$, let $\mathrm{supp}(\sigma)=\{i\in [n] : \sigma(i)\ne i\}$ be the \emph{support} of $\sigma$ and $\fix(\sigma) = \{i \in [n] : \sigma(i) = i\}$ the set of \emph{fixed points} of $\sigma$. For $I\subseteq \{2,3,\ldots,n\}$ and $2\leq k\leq n$, set
$$
\mathrm{Supp}_n(I) = \{ \sigma \in S_n : |\mathrm{supp}(\sigma)| \in I \}
$$
and
$$
\mathrm{Supp}_n(k) = \{ \sigma \in S_n : |\mathrm{supp}(\sigma)| = k \}.
$$
For $1 \leq r < k < n$ and $I\subseteq \{2,3,\ldots,n\}$, let $C(n,k)$ be the set of all $k$-cycles in $S_n$, $C(n,k;r)$ the set of all $k$-cycles $\sigma$ in $S_n$ with $\fix(\sigma) \subseteq \{r+1, \ldots, n\}$, and $C(n,I)=\bigcup_{k\in I} C(n,k)$. In \cite{LXZ}, Li, Xia and Zhou proved the following results for sufficiently large $n$ with the help of \cite[Proposition 2.3]{PP}: if $S$ is a single conjugacy class of $S_n$, then $\mathrm{Cay}(S_n,S)$ has the Aldous property if and only if $2\leq |\mathrm{supp}(\sigma)|\leq n-2$ for each $\sigma\in S$; for $\emptyset\ne I\subseteq \{2,3,\ldots,n\}$ with $|I\cap \{n-1,n\}|\ne 1$, $\mathrm{Cay}(S_n,\mathrm{Supp}_n(I))$ has the Aldous property if and only if $I\cap \{n-1,n\}=\emptyset$; for any $2 \le k \le n$, $\mathrm{Cay}(S_n,\mathrm{Supp}_n(I))$ where $I=\{2, \ldots, k\}$ has the Aldous property. In \cite{LXZJCTA2024}, the same authors proved that the strictly second largest eigenvalue of $\mathrm{Cay}(S_n,C(n,I))$ can be attained by at most four distinct irreducible representations of $S_n$, and for some special $I\subseteq \{2,3,\ldots,n\}$ they further determined the exact value of this eigenvalue and its multiplicity. As a corollary, they gave a necessary and sufficient condition for $\mathrm{Cay}(S_n,C(n,I))$ to have the Aldous property when $I\cap \{n-1,n\}=\emptyset$, and showed that this graph does not have the Aldous property whenever $n\in I$. Another corollary of the main results in \cite{LXZJCTA2024} states that for any $2 \le k \le n-2$, $\mathrm{Cay}(S_n,C(n,k))$ has the Aldous property, thereby proving a conjecture in \cite[Conjecture 1.4]{MR} and solving an open problem in \cite{SZ}. (See \cite{DS,HH,HH2,MR} for the proof of this conjecture when $k=2,3,4,5$ respectively.) 

In \cite{SZ}, Siemons and Zalesski conjectured that for $n \ge 5$ and $1 \leq r < k < n$ the nonnormal Cayley graph $\mathrm{Cay}(S_n,C(n,k;r))$ has the Aldous property. They proved in the same paper that this is true when $r=k-1$, and the case when $k = 2,3$ was established earlier in \cite{FOW} and \cite{HH}, respectively. In \cite[Theorem 1.3]{LXZJCTA2026}, Li, Xia and Zhou settled this conjecture by showing that for $n \ge 5$, $k \ge 4$ and $1 \leq r < k < n$, $\mathrm{Cay}(S_n,C(n,k;r))$ has the Aldous property except for (i) $(n,k,r) = (6,5,1)$, or (ii) $n$ is odd, $k=n-1$ and $1 \leq r < \frac{n}{2}$. In \cite{HH2}, Huang, Huang and Cioab\u{a} proved that most connected normal Cayley graphs on $S_n$ with $n\geq 7$ whose connection sets consist of permutations moving at most five points have the Aldous property. 
In the same paper, they posed two conjectures (\cite[Conjectures 23 and 24]{HH2}) regarding the quotient matrices of the full-flag Johnson graph $FJ(n, 2)$. These conjectures have recently been confirmed by Greaves and Zhu \cite{GZ2026}, establishing that $FJ(n, 2)$ possesses the Aldous property.
In \cite{C1}, Cesi showed that the pancake graph $P_n$ has the Aldous property, and Chung and Tobin~\cite{CT} generalized this result to a family of graphs containing all pancake graphs. The reader is referred to \cite[Section 8]{LZ} for more results on the Aldous property and the second largest eigenvalue of Cayley graphs. 

\subsection{Main results} 

A subset $S$ of $S_n \setminus \{e\}$ is called \cite{PP} a \emph{normal set} in $S_n$ if it is the union of some conjugacy classes of $S_n$. Since in $S_n$ every element is conjugate to its inverse element, a Cayley graph $\mathrm{Cay}(S_n,S)$ on $S_n$ is normal if and only if $S$ is a normal set in $S_n$. Clearly, for a normal set $S$ in $S_n$, the set 
$$
S^c=(S_n\setminus\{e\})\setminus S
$$
is also a normal set in $S_n$, and moreover $\mathrm{Cay}(S_n,S)$ and $\mathrm{Cay}(S_n,S^c)$ are complements of each other. Throughout the paper, we denote by $K_n$ the complete graph of order $n$, by $K_{m,n}$ the complete bipartite graph with $m$ and $n$ vertices in the two partite sets respectively, and by $t\Gamma$ the vertex-disjoint union of $t$ copies of a graph $\Gamma$. The first main result in this paper is as follows. 

\begin{thm}\label{thm:complement}
Let $n \ge 5$, and let $S$ be a nonempty normal set in $S_n$ with $S\neq S_n\setminus\{e\}$.
\begin{enumerate}[\rm (a)]
\item
If neither $S$ nor $S^c$ is equal to $A_n\setminus\{e\}$, then at most one of $\mathrm{Cay}(S_n,S)$ and $\mathrm{Cay}(S_n,S^c)$ has the Aldous property.
\item 
If either $S$ or $S^c$ is equal to $A_n\setminus\{e\}$, then one of $\mathrm{Cay}(S_n, S)$ and $\mathrm{Cay}(S_n, S^c)$ is isomorphic to $2K_{n!/2}$ and the other to $K_{n!/2,n!/2}$, and moreover both graphs have the Aldous property. 
\end{enumerate}
\end{thm}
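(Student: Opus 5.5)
The plan is to work entirely with the eigenvalue formula \eqref{eq:muj} and the relation between the eigenvalues of $\mathrm{Cay}(S_n,S)$ and of its complement. For an irreducible character $\chi_j$ write $\mu_j(S)=\sum_{s\in S}\tilde\chi_j(s)$. By column orthogonality, $\sum_{g\in S_n}\chi_j(g)=0$ for every nontrivial $\chi_j$, so $\sum_{g\ne e}\tilde\chi_j(g)=-1$. This gives
$$\mu_j(S^c)=-1-\mu_j(S)\qquad\text{for every nontrivial }\rho_j.$$

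Next I pin down the index $t=[S_n:\langle S\rangle]$. Since $S$ is a nonempty normal set, $\langle S\rangle$ is a nontrivial normal subgroup. For $n\ge5$ this forces $\langle S\rangle\in\{A_n,S_n\}$. Hence $t=2$ exactly when $S\subseteq A_n$, and $t=1$ otherwise.

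When $t=2$, the eigenvalue $|S|$ has multiplicity $2$ and is attained exactly by the trivial and sign representations, since $\tilde\chi_{(1^n)}\equiv1$ on $A_n$. So the Aldous property for $S$ says the following. Let $E(S)$ be the set of nontrivial irreducible representations, with the sign representation also removed when $S\subseteq A_n$. Then $\mu_{(n-1,1)}(S)\ge\mu_j(S)$ for all $\rho_j\in E(S)$. The standard representation always lies in $E(S)$ and is not among the representations attaining $|S|$, so this reformulation is exact.

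For part (a), suppose both $\mathrm{Cay}(S_n,S)$ and $\mathrm{Cay}(S_n,S^c)$ have the Aldous property. Applying the complement relation to the inequalities for $S^c$ gives $\mu_{(n-1,1)}(S)\le\mu_j(S)$ for $\rho_j\in E(S^c)$. Hence $\mu_j(S)=c:=\mu_{(n-1,1)}(S)$ for all $\rho_j\in E(S)\cap E(S^c)$. Since $S$ and $S^c$ cannot both lie in $A_n$ (the odd permutations lie in one of them), this intersection contains every nontrivial non-sign irreducible.

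Now I translate back to the indicator class function $1_S$. Its Fourier coefficients are $\langle 1_S,\chi_j\rangle=\chi_j(e)\mu_j(S)/n!$. Because irreducible characters form a basis of class functions, it follows that $1_S=a\,\delta_e+b\,\mathbf 1+c'\,\mathrm{sgn}$ for some constants. I would carry out a short case analysis:
\begin{itemize}
\item If neither set lies in $A_n$, the sign representation is also in the intersection. Then $c'=0$, and $1_S$ is constant on $S_n\setminus\{e\}$, so $S=\emptyset$ or $S=S_n\setminus\{e\}$. Both are excluded.
\item If $S\subseteq A_n$, evaluate $1_S$ on odd permutations to get $b-c'=0$. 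Then $1_S$ takes the constant value $b+c'\in\{0,1\}$ on $A_n\setminus\{e\}$, so $S=A_n\setminus\{e\}$, which is excluded in (a).
\item The case $S^c\subseteq A_n$ is symmetric.
\end{itemize}
The main obstacle is this bookkeeping of which representations share the top eigenvalue when $t=2$. I have handled it by the exact description of $E(S)$ above.

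For part (b), take $S=A_n\setminus\{e\}$. Then $\mathrm{Cay}(S_n,S)$ is the union of the two cosets of $A_n$, each a clique, so it is $2K_{n!/2}$. Its complement, with connection set the odd permutations, is $K_{n!/2,n!/2}$.

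For the spectra, $\mu_j(A_n\setminus\{e\})=-1$ for every irreducible other than trivial and sign, because $\sum_{g\in A_n}\chi_j(g)=0$ for such $\chi_j$. So the strictly second largest eigenvalue of $2K_{n!/2}$ is $-1=\mu_{(n-1,1)}$. For the complement, $t=1$ and the nontrivial eigenvalues are $-n!/2$ (sign) and $0$ (all others, by the complement relation). So the strictly second largest eigenvalue is $0$, attained by the standard representation. Both graphs therefore have the Aldous property.
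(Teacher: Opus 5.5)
Your proof is correct. Part (b) is essentially the paper's computation, but for part (a) you take a genuinely different route.

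\textbf{Common starting point.} Both arguments begin with the complement identity $\mu_j(S)+\mu_j(S^c)=-1$ for nontrivial $\rho_j$, and with the fact that $\langle S\rangle,\langle S^c\rangle\in\{A_n,S_n\}$.

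\textbf{The paper's route.} It splits into three cases according to the pair $(\langle S\rangle,\langle S^c\rangle)$. In each case it invokes the graph-theoretic fact (Godsil--Royle, Lemma 8.12.1) that a connected graph of diameter at least $2$ has at least three distinct eigenvalues. This supplies a representation $\beta$ whose eigenvalue on $S$ is strictly below $\lambda_{(n-1,1)}$, and, when $\langle S^c\rangle=A_n$, strictly above the sign eigenvalue. The complement identity then turns this into $\lambda_\beta(\mathrm{Cay}(S_n,S^c))>\lambda_{(n-1,1)}(\mathrm{Cay}(S_n,S^c))$. A separate subcase (exactly three distinct eigenvalues) is closed by noting that $\mathrm{Cay}(S_n,S^c)$ would then be two cliques.

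\textbf{Your route.} You combine the two Aldous inequalities to force $\mu_j(S)$ to be constant on every nontrivial non-sign irreducible. Fourier inversion on class functions then places $1_S$ in $\mathrm{span}\{\delta_e,\mathbf 1,\mathrm{sgn}\}$. This leaves only $\emptyset$, $S_n\setminus\{e\}$, $A_n\setminus\{e\}$ and $S_n\setminus A_n$.

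\textbf{Comparison.} Your argument is purely character-theoretic and needs no diameter or distinct-eigenvalue lemma. It also handles the paper's three-eigenvalue subcase uniformly, and it makes transparent that the obstruction is that a normal set is determined by its eigenvalues. The paper's argument is more graph-theoretic, trading Fourier inversion for a standard spectral fact about diameter.

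\textbf{Points to tighten when writing up.}
\begin{itemize}
\item State explicitly that $\langle\delta_e,\chi_j\rangle=\chi_j(e)/n!$. Then $1_S-c\,\delta_e$ is orthogonal to every $\chi_j$ with $\mu_j(S)=c$, which is what yields the claimed form; the basis property alone does not give it.
\item The identity $\sum_{g}\chi_j(g)=0$ is the row (first) orthogonality relation, not column orthogonality.
\item Rename your set $E(S)$, since it clashes with the paper's notation for the even permutations in $S$.
\end{itemize}
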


Theorem~\ref{thm:complement} implies that at most half of all normal Cayley graphs on $S_n$ exhibit the Aldous property. This is true no matter whether isomorphism is taken into consideration because two normal Cayley graphs are isomorphic if and only if their complements are isomorphic.

The second main result in this paper, Theorem \ref{thm:main1} below, settles Problem \ref{prob} for sufficiently large $n$ and normal Cayley graphs whose connection sets consist of permutations with support size between $2$ and $n-2$. For a subset $S\subseteq S_n$, denote by $O(S)$ and $E(S)$ the sets of odd and even permutations in $S$, respectively. 
 
\begin{thm}\label{thm:main1}
There exists a positive integer $N \geq 5$ such that, for every $n\ge N$ and every nonempty normal set $S$ in $S_n$ satisfying $2 \le |\mathrm{supp}(\sigma)| \le n-2$ for all  $\sigma\in S$, the following statements hold:
\begin{enumerate}[\rm (a)]
\item 
if $E(S)=\emptyset$ or $O(S)=\emptyset$, then $\mathrm{Cay}(S_n,S)$ has the Aldous property;

\item 
if $E(S)\ne\emptyset$ and $O(S)\ne\emptyset$, then $\mathrm{Cay}(S_n,S)$ has the Aldous property if and only if
\begin{equation}
\label{eq:fix}
\sum_{\sigma\in S} |\fix(\sigma)| \ge n|E(S)|-(n-2)|O(S)|;
\end{equation}
moreover, if $\mathrm{Cay}(S_n,S)$ does not have the Aldous property, then its strictly second largest eigenvalue is attained by the sign representation of $S_n$.
\end{enumerate}
\end{thm}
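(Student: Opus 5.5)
The eigenvalue attached to a partition $\lambda$ is
$$\mu_\lambda=\sum_{\sigma\in S}\tilde\chi_\lambda(\sigma),$$
and the plan is to compare these numbers over all $\lambda$. The standard representation gives
$$\mu_{(n-1,1)}=\sum_{\sigma\in S}\frac{|\fix(\sigma)|-1}{n-1}.$$
The sign representation gives $\mu_{(1^n)}=|E(S)|-|O(S)|$.

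\textbf{Step 1: only a few partitions matter.} We split $\widehat{S_n}\setminus\{\rho_{(n)}\}$ into three groups: the ``near-trivial'' partitions $\lambda$ with $\lambda_1\ge n-c$, their conjugates $\lambda'$, and the rest.

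For the middle range we use a uniform character bound of Larsen--Shalev/Roichman type. It states that $|\tilde\chi_\lambda(\sigma)|\le \max(\lambda_1/n,\lambda_1'/n)^{c'|\mathrm{supp}(\sigma)|}$, or, more usefully, that $|\tilde\chi_\lambda(\sigma)|$ is $O(1/n^2)$-small compared with $\tilde\chi_{(n-1,1)}(\sigma)$. This holds uniformly when $2\le|\mathrm{supp}(\sigma)|\le n-2$ and both $\lambda_1,\lambda_1'\le n-c$. This is where the hypothesis on support sizes and ``$n$ sufficiently large'' enter, presumably via \cite[Proposition 2.3]{PP} as in \cite{LXZ}.

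For the near-trivial $\lambda\ne(n),(n-1,1)$, we expand $\tilde\chi_\lambda(\sigma)$ as a polynomial in the cycle counts of $\sigma$ (fixed points, 2-cycles, and so on). We then show termwise that $\tilde\chi_\lambda(\sigma)\le\tilde\chi_{(n-1,1)}(\sigma)$ for each $\sigma$ with $|\mathrm{supp}(\sigma)|\le n-2$. Summing over $S$ gives $\mu_\lambda\le\mu_{(n-1,1)}$.

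For conjugates we use $\tilde\chi_{\lambda'}(\sigma)=\mathrm{sgn}(\sigma)\tilde\chi_\lambda(\sigma)$. Hence
$$\mu_{\lambda'}=\sum_{\sigma\in E(S)}\tilde\chi_\lambda(\sigma)-\sum_{\sigma\in O(S)}\tilde\chi_\lambda(\sigma).$$

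\textbf{Step 2: case (a).} Suppose $O(S)=\emptyset$. Then $\mu_{\lambda'}=\mu_\lambda\le\mu_{(n-1,1)}$ for every near-trivial $\lambda$, so everything is dominated by $\mu_{(n-1,1)}$.

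Suppose instead $E(S)=\emptyset$. Then $\mu_{\lambda'}=-\mu_\lambda$, and we must bound $-\mu_\lambda\le\mu_{(n-1,1)}$. For $\lambda=(n)$ this reads $-|S|\le\mu_{(n-1,1)}$, which holds since $|\fix(\sigma)|\ge 2$ gives $\mu_{(n-1,1)}>0$. For $\lambda$ near-trivial, $|\tilde\chi_\lambda(\sigma)|\le\tilde\chi_{(n-1,1)}(\sigma)$ should follow from the same expansion, again using that $\sigma$ has at least two fixed points.

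Next we check connectivity. The group $\langle S\rangle$ is $S_n$ or $A_n$ because $S$ is normal and $n\ge5$. If $\langle S\rangle=A_n$, then $t=2$ and the second copy of the degree eigenvalue comes from sign. The strictly second eigenvalue is then the maximum over the remaining representations, and the same comparison applies.

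\textbf{Step 3: case (b).} With both parities present, the competitor is $\mu_{(1^n)}$. The condition $\mu_{(n-1,1)}\ge\mu_{(1^n)}$ reads
$$\sum_{\sigma\in S}(|\fix(\sigma)|-1)\ge (n-1)\bigl(|E(S)|-|O(S)|\bigr),$$
that is,
$$\sum_{\sigma\in S}|\fix(\sigma)|\ge n|E(S)|-(n-2)|O(S)|,$$
which is exactly \eqref{eq:fix}.

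It then remains to show that every other $\mu_\lambda$ is at most $\max(\mu_{(n-1,1)},\mu_{(1^n)})$. For the middle range and the near-trivial $\lambda$ this follows from Step 1. For conjugates $\lambda'$ of near-trivial $\lambda\ne(n)$, we have $\mu_{\lambda'}\le\sum_{\sigma\in S}|\tilde\chi_\lambda(\sigma)|$. We want this to be at most $\max(\mu_{(n-1,1)},\mu_{(1^n)})$. The idea is a convexity argument: each $|\tilde\chi_\lambda(\sigma)|$ is at most $\tilde\chi_{(n-1,1)}(\sigma)$ or near it, while $\mu_{(1^n)}$ covers the sign-twisted part. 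This gives the dichotomy, and in the failing case the maximum is $\mu_{(1^n)}$.

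\textbf{Main obstacle.} The hardest part is the uniform comparison in Step 1 for $\lambda$ and $\lambda'$ with $\lambda$ near-trivial, valid for all $\sigma$ with support up to $n-2$. It is delicate when $|\fix(\sigma)|=2$: there $\tilde\chi_{(n-1,1)}(\sigma)=1/(n-1)$ is tiny, and $\tilde\chi_{(n-2,2)}$ or $\tilde\chi_{(n-2,1,1)}$ must be checked explicitly via the formulas in terms of fixed points and 2-cycles. I would first handle $\lambda\in\{(n-2,2),(n-2,1,1)\}$ and their conjugates by explicit formulas. I would then quote a Müller--Schlage-Puchta-type bound for everything else.
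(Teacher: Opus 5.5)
Your bookkeeping matches the paper. The values $\mu_{(n-1,1)}$ and $\mu_{(1^n)}=|E(S)|-|O(S)|$ are right, and so is the connectivity analysis. The algebra turning $\mu_{(n-1,1)}\ge\mu_{(1^n)}$ into \eqref{eq:fix} is also correct.

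The gap is the step everything depends on: that $\mu_\zeta\le\mu_{(n-1,1)}$ for every $\zeta\notin\{(n),(1^n)\}$. Your three-range argument does not establish it.
\begin{itemize}
\item In the middle range, the Roichman-type bound you quote cannot give the comparison at large supports. If $|\fix(\sigma)|=2$ then $\tilde\chi_{(n-1,1)}(\sigma)=1/(n-1)$. But $\max(\lambda_1/n,\lambda_1'/n)^{c'|\mathrm{supp}(\sigma)|}$ with $\lambda_1,\lambda_1'\le n-c$ and $|\mathrm{supp}(\sigma)|=n-2$ is only about $e^{-cc'}$, a constant.
\item For near-trivial $\lambda$ and their conjugates, the termwise comparison is asserted (``should follow from the same expansion'') but never carried out.
\item The ``convexity argument'' in (b) cannot work. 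The theorem is an exact dichotomy, so a bound $|\tilde\chi_\lambda(\sigma)|\le\tilde\chi_{(n-1,1)}(\sigma)$ ``or near it'' is useless. Nor can $\mu_{(1^n)}$ absorb any excess: when $|O(S)|\ge|E(S)|$ you have $\mu_{(1^n)}\le 0<\mu_{(n-1,1)}$. The theorem then asserts the Aldous property, which needs $\mu_{\lambda'}\le\mu_{(n-1,1)}$ exactly, with no help from the sign representation.
\end{itemize}

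The missing observation is that Lemma~\ref{PPmax}, i.e.\ \cite[Proposition 2.3]{PP}, is a pointwise statement over \emph{all} $\zeta\notin\{(n),(1^n)\}$ at once. You invoke it only for the middle range, but it already covers the conjugates of near-trivial shapes, such as $(2,1^{n-2})$. The hypothesis $|\mathrm{supp}(\sigma)|\le n-2$ is exactly $|\fix(\sigma)|\ge 2$. So $\tilde\chi_\zeta(\sigma)\le\tilde\chi_{(n-1,1)}(\sigma)$ for every $\sigma\in S$ and every such $\zeta$. Summing over $S$ gives $\mu_\zeta\le\mu_{(n-1,1)}$ directly, with no split into ranges, no absolute values and no convexity.

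The conclusions then follow quickly.
\begin{itemize}
\item In (a), the strictly second largest eigenvalue is $\max_{\zeta\notin\{(n),(1^n)\}}\mu_\zeta=\mu_{(n-1,1)}$. This is because $(1^n)$ either duplicates the top eigenvalue (when $O(S)=\emptyset$) or gives the bottom one, $-|S|$ (when $E(S)=\emptyset$).
\item In (b), it is $\max\{\mu_{(n-1,1)},\mu_{(1^n)}\}$, which yields both \eqref{eq:fix} and the statement about the sign representation.
\end{itemize}
If you want to avoid citing \cite{PP}, you would have to reprove that proposition. Your sketch does not yet contain the estimates it needs, especially for supports near $n-2$.
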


If $2 \le |\mathrm{supp}(\sigma)| \le n-2$ for all $\sigma\in S$, then $\sum_{\sigma\in S} |\fix(\sigma)| \geq 2|S| = 2|E(S)|+2|O(S)|$, and hence \eqref{eq:fix} is satisfied when $n|O(S)| \ge (n-2)|E(S)|$. So we obtain the following corollary of Theorem~\ref{thm:main1} immediately. 

\begin{cor}\label{odd-even}
There exists a positive integer $N \ge 5$ such that, for every $n \ge N$ and every nonempty normal set $S$ in $S_n$ satisfying $2 \le |\mathrm{supp}(\sigma)| \le n-2$ for all $\sigma \in S$, if $n|O(S)| \ge (n-2)|E(S)|$, then $\mathrm{Cay}(S_n, S)$ has the Aldous property.
\end{cor}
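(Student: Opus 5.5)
This is immediate from Theorem~\ref{thm:main1}; I take the same $N$ and verify that the hypothesis $n|O(S)| \ge (n-2)|E(S)|$ always places us in a case where that theorem gives the Aldous property.

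\emph{Case 1: $E(S)=\emptyset$ or $O(S)=\emptyset$.} Theorem~\ref{thm:main1}(a) applies directly. In this case the extra hypothesis is not even needed.

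\emph{Case 2: $E(S)\neq\emptyset$ and $O(S)\neq\emptyset$.} By Theorem~\ref{thm:main1}(b), it suffices to verify inequality \eqref{eq:fix}.

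1. Every $\sigma\in S$ satisfies $|\mathrm{supp}(\sigma)|\le n-2$, so $|\fix(\sigma)|\ge 2$. Summing over $S$,
\[
\sum_{\sigma\in S}|\fix(\sigma)|\ \ge\ 2|S| \;=\; 2|E(S)|+2|O(S)|.
\]

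2. The hypothesis $n|O(S)| \ge (n-2)|E(S)|$ rearranges to
\[
2|E(S)|+2|O(S)| \;\ge\; n|E(S)|-(n-2)|O(S)|.
\]

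3. Chaining steps 1 and 2 gives exactly \eqref{eq:fix}. Hence $\mathrm{Cay}(S_n,S)$ has the Aldous property.

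There is no real obstacle here: all the difficulty is contained in Theorem~\ref{thm:main1}, which we are allowed to assume. The only point needing care is that Case 1 must be handled by part (a), since part (b) requires $E(S)$ and $O(S)$ both to be nonempty.
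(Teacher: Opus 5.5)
Your proposal is correct and matches the paper's own argument: the bound $\sum_{\sigma\in S}|\fix(\sigma)|\ge 2|S|$ combined with the rearranged hypothesis yields \eqref{eq:fix}, so Theorem~\ref{thm:main1}(b) applies. Handling the case $E(S)=\emptyset$ or $O(S)=\emptyset$ explicitly via part (a) is a careful touch that the paper leaves implicit (and the subcase $O(S)=\emptyset$ is in fact excluded by the hypothesis, since $S\neq\emptyset$ and $n>2$).
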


Another corollary of Theorem~\ref{thm:main1} is the following explicit formula for the second largest eigenvalue of normal Cayley graphs on the alternating group $A_n$.

\begin{cor}\label{An-sec-eigen}
There exists a positive integer $N \ge 5$ such that, for every $n \ge N$ and every nonempty normal set $S$ in $S_n$, if $S \subseteq A_n$ and $2 \le |\mathrm{supp}(\sigma)| \le n-2$ for all $\sigma \in S$, then the strictly second largest eigenvalue of $\mathrm{Cay}(S_n, S)$ is given by
$$
\frac{1}{n-1} \big(\sum_{\sigma \in S} |\fix(\sigma)| - |S| \big),
$$
and moreover $\mathrm{Cay}(S_n, S)$ is disconnected with two components each isomorphic to $\mathrm{Cay}(A_n, S)$. 
\end{cor}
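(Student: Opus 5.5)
This corollary should follow from Theorem~\ref{thm:main1}(a) with $O(S)=\emptyset$, together with the character formula \eqref{eq:muj} for the standard representation and a short argument about connectivity.

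\textbf{Connectivity.} Since $S\subseteq A_n$, we have $\langle S\rangle\le A_n$. Because $S$ is a nonempty normal set in $S_n$, the subgroup $\langle S\rangle$ is a nontrivial normal subgroup of $S_n$ contained in $A_n$. For $n\ge 5$ the normal subgroups of $S_n$ are only $1$, $A_n$ and $S_n$, so $\langle S\rangle=A_n$.

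It follows that $t=[S_n:\langle S\rangle]=2$. The two components are the induced subgraphs on the cosets $A_n$ and $S_n\setminus A_n$. Left multiplication by any odd permutation is a graph automorphism of a Cayley graph, since the graph is defined by right multiplication. This automorphism swaps the two cosets, so both components are isomorphic to $\mathrm{Cay}(A_n,S)$. Consequently the strictly second largest eigenvalue is $\lambda_3(\mathrm{Cay}(S_n,S))$. Note that the sign representation also gives eigenvalue $|S|$, which is consistent with $t=2$.

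\textbf{The eigenvalue.} Here $O(S)=\emptyset$, and $S$ satisfies the support hypothesis. Theorem~\ref{thm:main1}(a) therefore applies and shows that the graph has the Aldous property. So, taking $N$ to be the constant from Theorem~\ref{thm:main1}, the strictly second largest eigenvalue equals $\sum_{\sigma\in S}\tilde\chi_{(n-1,1)}(\sigma)$.

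The standard representation is the permutation representation on $[n]$ minus the trivial one. Hence $\chi_{(n-1,1)}(\sigma)=|\fix(\sigma)|-1$, and $\chi_{(n-1,1)}(e)=n-1$. Summing over $S$ gives
\[
\frac{1}{n-1}\Big(\sum_{\sigma\in S}|\fix(\sigma)|-|S|\Big),
\]
as claimed.

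\textbf{Main point of care.} There is no real obstacle beyond checking that the ``strictly second largest'' eigenvalue in Theorem~\ref{thm:main1} is taken with respect to $t=2$. This matches the paper's definition, in which $t$ is the index of $\langle S\rangle$ in $S_n$.
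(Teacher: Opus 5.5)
Your proposal is correct and follows essentially the same route as the paper. You apply Theorem~\ref{thm:main1}(a) with $O(S)=\emptyset$, read off $\lambda_{(n-1,1)}$ from $\chi_{(n-1,1)}(\sigma)=|\fix(\sigma)|-1$, and use that $A_n$ is the only nontrivial normal subgroup of $S_n$ contained in $A_n$ to obtain the two components; your remark that left multiplication by an odd permutation swaps the cosets usefully justifies the isomorphism of the components, which the paper leaves implicit.
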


For a subset $S \subseteq S_n$, let
\begin{equation}
\label{eq:ABCS}
A_S = S \cap \mathrm{Supp}_n(\{2,3,\ldots,n-2\}),\; B_S = S \cap \mathrm{Supp}_n(n-1),\; C_S = S \cap \mathrm{Supp}_n(n)
\end{equation}
and
\begin{equation}
\label{eq:barABCS}
\overline{A}_S = \mathrm{Supp}_n(\{2,3,\ldots,n-2\}) \setminus A_S,\; \overline{B}_S = \mathrm{Supp}_n(n-1)\setminus B_S,\; \overline{C}_S = \mathrm{Supp}_n(n)\setminus C_S.
\end{equation}
The following theorem classifies all normal Cayley graphs $\mathrm{Cay}(S_n, S)$ that have the Aldous property under an easy-to-check condition.

\begin{thm}\label{thm:main2}
Let $n\ge 5$, and let $S$ be a nonempty normal set in $S_n$. If  
\begin{equation}
\label{eq:ABC}
\sum_{\sigma\in A_S} |\fix(\sigma)| \le |A_S|+|C_S|+n-1,
\end{equation}
then $\mathrm{Cay}(S_n, S)$ has the Aldous property if and only if $S = S_n\setminus\{e\}, S_n\setminus A_n$ or $A_n\setminus\{e\}$, or, equivalently, $\mathrm{Cay}(S_n,S)\cong K_{n!}, K_{n!/2,n!/2}$ or $2K_{n!/2}$, respectively.
\end{thm}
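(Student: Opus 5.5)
The plan has three steps: bound the standard-representation eigenvalue by $1$ using \eqref{eq:ABC}, reduce the Aldous property to a graph with strictly second largest eigenvalue at most $1$, and then classify such normal Cayley graphs via complements with least eigenvalue at least $-2$.

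\emph{Step 1: evaluate the standard representation.} For $\sigma\in S_n$ we have $\tilde{\chi}_{(n-1,1)}(\sigma)=(|\fix(\sigma)|-1)/(n-1)$, and \eqref{eq:muj} gives the corresponding eigenvalue. Every $\sigma\in B_S$ has exactly one fixed point and contributes $0$. Every $\sigma\in C_S$ has no fixed points and contributes $-1/(n-1)$. Hence
\[
\sum_{\sigma\in S}\tilde{\chi}_{(n-1,1)}(\sigma)=\frac{1}{n-1}\Big(\sum_{\sigma\in A_S}|\fix(\sigma)|-|A_S|-|C_S|\Big),
\]
and \eqref{eq:ABC} says exactly that this quantity is at most $1$.

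\emph{Step 2: reduce to a spectral condition.} If $\mathrm{Cay}(S_n,S)$ has the Aldous property, Step 1 gives $\lambda_{t+1}(\mathrm{Cay}(S_n,S))\le 1$. For the converse direction I will check directly that $K_{n!}$, $K_{n!/2,n!/2}$ and $2K_{n!/2}$ have the Aldous property, which is essentially Theorem~\ref{thm:complement}(b). So the theorem reduces to classifying nonempty normal sets $S$ for which the strictly second largest eigenvalue of $\mathrm{Cay}(S_n,S)$ is at most $1$. I will show these are only the three listed sets, and that they satisfy \eqref{eq:ABC} or are otherwise consistent with it. Since $t=[S_n:\langle S\rangle]$ and the only proper nontrivial normal subgroup of $S_n$ ($n\ge5$) is $A_n$, it suffices to treat two cases: $\langle S\rangle=S_n$, and $S\subseteq A_n$ with each of the two components isomorphic to $\mathrm{Cay}(A_n,S)$.

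\emph{Step 3: classify via the complement (the main obstacle).} In the connected case every nontrivial eigenvalue of $\mathrm{Cay}(S_n,S)$ is at most $1$. Hence every nontrivial eigenvalue of the complement $\mathrm{Cay}(S_n,S^c)$, which equals $-1-\mu$ for an eigenvalue $\mu$ of $\mathrm{Cay}(S_n,S)$, is at least $-2$. The complement is again a regular normal Cayley graph. By the classical theory of graphs with least eigenvalue $\ge -2$ (Cameron--Goethals--Seidel--Shult), a regular graph with this many vertices, far beyond the exceptional root-system bound, is either a line graph or a cocktail party graph.
\begin{itemize}
\item The cocktail party case would make $\mathrm{Cay}(S_n,S^c)$ the complement of a perfect matching, forcing $S$ to be a single central involution. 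This is impossible for $n\ge5$, since no conjugacy class of $S_n$ other than $\{e\}$ has size $1$.
\item In the line graph case, I plan to classify normal Cayley graphs on $S_n$ that are line graphs. The neighbourhood of $e$, namely $S^c$, must split into at most two cliques of $\mathrm{Cay}(S_n,S^c)$, and this decomposition must be invariant under conjugation. A clique containing $e$ together with a conjugation-invariant family should be a normal subgroup minus $e$, which forces $S^c\cup\{e\}$ to be $\{e\}$, $A_n$ or $S_n$. Concretely, for $x,y\in S^c$ in the same clique we need $x^{-1}y\in S^c$, and conjugation-invariance should give closure under products. I expect this to be the delicate part.
\end{itemize}
The possibilities $S^c=\emptyset$, $S^c=A_n\setminus\{e\}$ and $S^c=S_n\setminus A_n$ give $S=S_n\setminus\{e\}$, $S_n\setminus A_n$ and $A_n\setminus\{e\}$ respectively. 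For the disconnected case $S\subseteq A_n$, I run the same argument inside $A_n$ for $\mathrm{Cay}(A_n,S)$, using simplicity of $A_n$; this leaves only $S=A_n\setminus\{e\}$.
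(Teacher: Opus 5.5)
Your Steps 1 and 2, and the reduction at the start of Step 3, match the paper's argument. The paper computes $\lambda_{(n-1,1)}$ exactly as in \eqref{eq:abc}. It reduces necessity to classifying normal Cayley graphs on $S_n$ with strictly second largest eigenvalue at most $1$ (Theorem~\ref{sn-cay-sec-1}). It proves that classification by passing to the complement and applying Stani\'c's theorem, which packages the Cameron--Goethals--Seidel--Shult theory you invoke. Exceptional graphs are discarded by order, and cocktail party graphs because $S_n$ has trivial centre. (When $\langle S^c\rangle=A_n$, the cocktail party case gives $|S\cap A_n|=1$ rather than ``$S$ is a single central involution'', but the contradiction is the same.)

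\textbf{The gap.} It is the step you yourself flag: classifying the normal Cayley graphs $\mathrm{Cay}(S_n,T)$, $n\ge5$, that are line graphs. This is the real content of the proof (Theorem~\ref{class-linegraph} in the paper), and your sketch does not establish it.

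\textbf{Why the proposed mechanism fails.} If $Q\cup\{e\}$ is a clique, you only get $x^{-1}y\in T\cup\{e\}$ for $x,y\in Q$. The element $x^{-1}y$ may lie in the other part, and conjugation-invariance does not repair this.
\begin{enumerate}[(i)]
\item In the line graph $K_{n!}=\mathrm{Cay}(S_n,S_n\setminus\{e\})$, the neighbourhood of $e$ splits into the two conjugation-invariant cliques $C(n,2)$ and $S_n\setminus(C(n,2)\cup\{e\})$. Yet $C(n,2)\cup\{e\}$ is not a subgroup.
\item So you would have to use the specific decomposition into the two stars of the root graph. You would then need to show that conjugation preserves it. This is not automatic: a neighbourhood may have several two-clique covers, and an odd permutation might swap the parts, so something like Whitney's theorem is needed.
\item Even then, closure is not automatic. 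In $C_m=L(C_m)=\mathrm{Cay}(\mathbb{Z}_m,\{\pm1\})$ with $m\ge4$, the connection set is trivially conjugation-invariant. The two stars through $0$ are $\{0,1\}$ and $\{-1,0\}$, and neither is a subgroup.
\end{enumerate}
So closure must come from specific structure of $S_n$, which your sketch does not supply.

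\textbf{What the paper does instead.} It argues directly with two of Beineke's forbidden induced subgraphs, the claw and $K_5$ minus an edge.
\begin{enumerate}[(i)]
\item If $T$ contains all transpositions, an induction on support size forces $T=S_n\setminus\{e\}$.
\item If $T$ contains all $3$-cycles but no transposition, the same method forces $T=A_n\setminus\{e\}$.
\item If every element of $T$ moves at least four points, take three suitable conjugates of an element of $T$. Their pairwise quotients are $3$-cycles, $5$-cycles or products of two $3$-cycles, none of which lies in $T$ by the assumption or earlier cases. So for $n\ge5$ they form a claw at $e$.
\end{enumerate}
You need an argument of this strength, or a genuinely complete version of your root-graph approach, to finish the proof.
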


Note that condition \eqref{eq:ABC} holds only if either $A_S$ is empty or $C_S$ is nonempty. Theorem \ref{thm:main2} implies the following corollary.

\begin{cor}\label{cor:main2}
Let $n\ge 5$, and let $S$ be a nonempty normal set in $S_n$. Then $\mathrm{Cay}(S_n,S)$ does not have the Aldous property if one of the following conditions holds:
\begin{enumerate}[\rm (a)]
\item $S \subseteq \mathrm{Supp}_n(\{n-1,n\})$;
\item $|\overline{A}_S|-|\overline{C}_S|+2(n-1) \ge 0$ and $S \notin \{S_n\setminus\{e\}, S_n\setminus A_n, A_n\setminus\{e\}\}$;
\item $\mathrm{Supp}_n(n)\subseteq S$ and $S \neq S_n\setminus\{e\}$.
\end{enumerate}
\end{cor}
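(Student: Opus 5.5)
Each part will be derived from Theorem~\ref{thm:main2}. In every case we show that $S$ satisfies \eqref{eq:ABC}, or that $S$ is one of the three exceptional sets. For the non-exceptional sets, Theorem~\ref{thm:main2} then says the Aldous property fails.

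For (a), $S\subseteq \mathrm{Supp}_n(\{n-1,n\})$ gives $A_S=\emptyset$. The left side of \eqref{eq:ABC} is then $0$, so \eqref{eq:ABC} holds. It remains to check that $S$ is none of the three exceptional sets when $n\ge 5$. The sets $S_n\setminus\{e\}$ and $A_n\setminus\{e\}$ both contain $3$-cycles, whose support size $3$ lies in $\{2,\dots,n-2\}$. The set $S_n\setminus A_n$ contains transpositions, of support size $2$. So none of them lies in $\mathrm{Supp}_n(\{n-1,n\})$, and Theorem~\ref{thm:main2} gives the conclusion.

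For (b), rewrite \eqref{eq:ABC} in terms of complements. Put $M=\sum_{\sigma\in \mathrm{Supp}_n(\{2,\dots,n-2\})}|\fix(\sigma)|$. For $\sigma\in \overline{A}_S$ we have $|\fix(\sigma)|\ge 2$, so
\[
\sum_{\sigma\in A_S}|\fix(\sigma)|\le M-2|\overline{A}_S|.
\]
Also $|A_S|=|\mathrm{Supp}_n(\{2,\dots,n-2\})|-|\overline{A}_S|$ and $|C_S|=|\mathrm{Supp}_n(n)|-|\overline{C}_S|$. So \eqref{eq:ABC} follows from
\[
M-2|\overline{A}_S|\le |\mathrm{Supp}_n(\{2,\dots,n-2\})|-|\overline{A}_S|+|\mathrm{Supp}_n(n)|-|\overline{C}_S|+n-1 .
\]
Two ingredients make this work:
\begin{enumerate}[\rm (i)]
\item By Burnside's lemma, $\sum_{\sigma\in S_n}|\fix(\sigma)|=n!$. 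Removing the identity ($n$ fixed points) and the $|\mathrm{Supp}_n(n-1)|$ elements with one fixed point gives $M=n!-n-|\mathrm{Supp}_n(n-1)|$.
\item Every permutation is the identity or has support size in $\{2,\dots,n-2\}$, $n-1$ or $n$. So $|\mathrm{Supp}_n(\{2,\dots,n-2\})|+|\mathrm{Supp}_n(n)|=n!-1-|\mathrm{Supp}_n(n-1)|$.
\end{enumerate}
Substituting both, the $n!$ and $|\mathrm{Supp}_n(n-1)|$ terms cancel, and the sufficient inequality becomes
\[
-n-2|\overline{A}_S|\le -1-|\overline{A}_S|-|\overline{C}_S|+n-1,
\]
that is, $|\overline{A}_S|-|\overline{C}_S|+2n\ge 0$. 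The hypothesis $|\overline{A}_S|-|\overline{C}_S|+2(n-1)\ge 0$ is stronger than this, so \eqref{eq:ABC} holds. Since $S$ is assumed not to be one of the three exceptional sets, Theorem~\ref{thm:main2} finishes (b). The only delicate point is this bookkeeping, in particular the derangement count from Burnside.

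For (c), $\mathrm{Supp}_n(n)\subseteq S$ means $\overline{C}_S=\emptyset$. Then $|\overline{A}_S|-|\overline{C}_S|+2(n-1)=|\overline{A}_S|+2(n-1)\ge 0$, so the inequality in (b) holds. Now rule out the exceptional sets other than $S_n\setminus\{e\}$, which is excluded by hypothesis. For $n\ge 5$, $\mathrm{Supp}_n(n)$ contains both an $n$-cycle and an $(n-1)$-cycle times a transposition, and these have opposite parities. So $S$ contains both even and odd permutations, and $S\ne A_n\setminus\{e\}$ and $S\ne S_n\setminus A_n$. Part (b) then applies.
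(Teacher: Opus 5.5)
Your proof is correct and follows essentially the same route as the paper. For (a) you use $A_S=\emptyset$ and exclude the three exceptional sets. For (b) you use the identity $\sum_{\sigma\in S_n}|\fix(\sigma)|=n!$ and the bound $|\fix(\sigma)|\ge 2$ on $\overline{A}_S$ to reduce \eqref{eq:ABC} to the hypothesis. For (c) you reduce to (b) via $\overline{C}_S=\emptyset$ and a parity argument.

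Two slips in the write-up need fixing:
\begin{enumerate}[\rm (i)]
\item In (b), the simplification of $-n-2|\overline{A}_S|\le -1-|\overline{A}_S|-|\overline{C}_S|+n-1$ gives $|\overline{A}_S|-|\overline{C}_S|+2(n-1)\ge 0$, not $|\overline{A}_S|-|\overline{C}_S|+2n\ge 0$. So the hypothesis is exactly the needed inequality, not a stronger one, and the conclusion is unaffected.
\item In (c), an $(n-1)$-cycle times a transposition does not lie in $\mathrm{Supp}_n(n)$, since only one point is left for a disjoint transposition. The right witness is an $(n-2)$-cycle times a disjoint transposition. Its sign is $(-1)^{n}$, opposite to the sign $(-1)^{n-1}$ of an $n$-cycle.
\end{enumerate}
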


In \cite[Problem 1.2]{LXZ}, Li, Xia and Zhou ask for a necessary and sufficient condition for $\mathrm{Cay}(S_n, \mathrm{Supp}_n(I))$ with $\{n\} \subset I \subseteq \{2,3,\ldots,n-2,n\}$ to have the Aldous property for sufficiently large $n$. Solving this problem, part (c) of Corollary \ref{cor:main2} implies that such a graph $\mathrm{Cay}(S_n, \mathrm{Supp}_n(I))$ does not have the Aldous property for any $n \geq 5$. 

Theorem \ref{thm:main1} and Corollary \ref{cor:main2} together resolve Problem \ref{prob} for all normal Cayley graphs $\mathrm{Cay}(S_n, S)$ with sufficiently large $n$ except when $S \cap \mathrm{Supp}_n(\{2, 3, \dots, n-2\}) \neq \emptyset$, $S \cap \mathrm{Supp}_n(\{n-1, n\}) \neq \emptyset$, and $\mathrm{Supp}_n(n) \not\subseteq S$.

The rest of the paper is organized as follows. The next section is a quick review of representation theory preliminaries. We prove Theorem~\ref{thm:complement} in Section~\ref{sec:complements} and establish Theorem~\ref{thm:main1} and Corollary~\ref{An-sec-eigen} in Section~\ref{supp-small}. To prove Theorem \ref{thm:main2}, we first need to determine all normal Cayley graphs on $S_n$ that are line graphs (see Theorem \ref{class-linegraph}). Using this and the classification \cite{Stanic2010} of connected regular graphs with second largest eigenvalue at most $1$, we obtain a classification of all normal Cayley graphs on $S_n$ whose strictly second largest eigenvalue is at most $1$ (see Theorem~\ref{sn-cay-sec-1}). With the help of this classification, which is of interest for its own sake, we prove Theorem~\ref{thm:main2} and Corollary \ref{cor:main2} in Section~\ref{supp-large}.

\section{Preliminaries}\label{sec:preliminaries}

The reader is referred to \cite{I,JL,S1,S} for general representation theory and \cite{Sagan} for symmetric groups and their representations. A \emph{partition} of a positive integer $n$ is a sequence of positive integers $\gamma=(\gamma_1,\gamma_2,\ldots,\gamma_m)$ such that $\gamma_1\ge \gamma_2\ge\cdots\ge\gamma_m$ and $n=\gamma_1+\gamma_2+\cdots+\gamma_m$. We write $\gamma \vdash n$ to indicate this partition of $n$, and let $c_{i}(\gamma)$ denote the number of parts of $\gamma$ which are equal to $i$. The \emph{sign} of a permutation $\sigma\in S_n$, denoted by $\mathrm{sgn}(\sigma)$, is defined to be $1$ if $\sigma$ is an even permutation and $-1$ if $\sigma$ is an odd permutation. Every permutation $\sigma\in S_n$ decomposes into disjoint cycles; the \emph{cycle type} of $\sigma$ is the partition of $n$ whose parts are the lengths of these cycles. It is well known that two elements of $S_n$ are conjugate if and only if they have the same cycle type. Thus, the conjugacy classes of $S_n$ are determined by their cycle types and thus correspond to the partitions of $n$. For a partition $\gamma\vdash n$, denote by $C(S_n,\gamma)$ the corresponding conjugacy class of $S_n$. Note that $c_{1}(\gamma) = |\fix(\sigma)|$ is the number of fixed points of each $\sigma \in C(S_n,\gamma)$. Define $\mathrm{sgn}(\gamma)=1$ if the permutations in $C(S_n,\gamma)$ are even, and $\mathrm{sgn}(\gamma)=-1$ otherwise. 

For each partition $\zeta\vdash n$, let $\rho_\zeta$ denote the corresponding Specht module in $\widehat{S_n}$. The Specht modules $\rho_{(n)}$, $\rho_{(1^n)}$ and $\rho_{(n-1,1)}$ are called the \emph{trivial representation}, \emph{sign representation} and \emph{standard representation} of $S_n$, respectively. We denote the character and normalized character of $\rho_\zeta$ by $\chi_\zeta(\cdot)$ and $\tilde{\chi}_\zeta(\cdot)$, respectively. Since they are class functions on $S_n$, we write $\chi_\zeta(\gamma)$ and $\tilde{\chi}_\zeta(\gamma)$ for their values on the conjugacy class $C(S_n,\gamma)$.  It is well known that $\chi_{(n)}(\sigma)=\tilde{\chi}_{(n)}(\sigma)=1$ and $\chi_{(1^n)}(\sigma)=\tilde{\chi}_{(1^n)}(\sigma)=\mathrm{sgn}(\sigma)$ for every $\sigma\in S_n$. Note that $\chi_\zeta(e)=\dim\rho_{\zeta}$ is the dimension of $\rho_\zeta\in \widehat{S_n}$ for any $\zeta\vdash n$.

In view of \eqref{eq:muj}, for a normal Cayley graph $\mathrm{Cay}(S_n, S)$ and a partition $\zeta \vdash n$, we use $\lambda_\zeta(\mathrm{Cay}(S_n, S))$, or $\lambda_\zeta$ for short, to denote its eigenvalue corresponding to the Specht module $\rho_\zeta$. That is,
\begin{equation}\label{eq:eigen-chara}
    \lambda_\zeta(\mathrm{Cay}(S_n, S)) = \sum_{\sigma \in S} \tilde{\chi}_\zeta(\sigma).
\end{equation}
It is known \cite{F} that the character of every $\rho_\zeta\in \widehat{S_n}$ on every conjugacy class of $S_n$ is an integer whose absolute value is at most the dimension of $\rho_\zeta$. Hence $\tilde{\chi}_\zeta(\gamma)$ is a rational number in the interval $[-1,1]$ for all $\zeta,\gamma\vdash n$.  The following table, which is extracted from \cite[Table 1]{PP}, gives the dimensions and characters of some irreducible representations of $S_n$.

\begin{table}[ht]
\centering
\begin{tabular}{c c c}
\toprule
$~~~~\zeta\vdash n~~~$ & $~~~~\dim \rho_\zeta=\chi_\zeta(e)~~~~$ & $~~~~\chi_\zeta(\gamma)~\text{with}~c_i(\gamma)=c_i~~~~$\\\toprule
$(n)$ & $1$ & $1$\\ \midrule
$(n-1,1)$ & $n-1$ & $c_1-1$\\\midrule
$(n-2,2)$ & $\frac{n(n-3)}{2}$ & $\frac{c_1(c_1-3)}{2}+c_2$\\\midrule
$(n-2,1,1)$& $\frac{(n-1)(n-2)}{2}$ & $\frac{(c_1-1)(c_1-2)}{2}-c_2$\\ \midrule
$(n-3,3)$& $\frac{n(n-1)(n-5)}{6}$ & $\frac{c_1(c_1-1)(c_1-5)}{6}+(c_1-1)c_2+c_3$\\ \midrule
$(n-3,2,1)$ & $\frac{n(n-2)(n-4)}{3}$ & $\frac{c_1(c_1-2)(c_1-4)}{3}-c_3$ \\ 
\bottomrule
\end{tabular}
\vspace{0.2cm}
\caption{Dimensions and characters of some irreducible representations of $S_n$}
\label{tab:tab1}
\end{table}

The following lemma will play a crucial role in our proof of Theorem~\ref{thm:main1}.

\begin{lem}[cf.~{\cite[Proposition 2.3]{PP}}]\label{PPmax}
There exists a positive integer $N_0$ such that for every $n\ge N_0$ and any $\gamma\vdash n$ with $|\fix(\gamma)| \geq 2$, we have $$\max_{\substack{\zeta\vdash n \\ \zeta \notin \{(n), (1^n)\}}} \tilde{\chi}_\zeta(\gamma)=\tilde{\chi}_{(n-1,1)}(\gamma).$$ 
\end{lem}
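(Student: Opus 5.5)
This lemma is cited from \cite[Proposition 2.3]{PP}, and the plan is to recover it from character bounds for $S_n$ in which the number of fixed points is the key parameter. Write $f=|\fix(\gamma)|\ge 2$. By Table~\ref{tab:tab1}, $\tilde{\chi}_{(n-1,1)}(\gamma)=(f-1)/(n-1)$, which is positive. So the task is to show that every $\zeta\notin\{(n),(1^n)\}$ satisfies $\tilde{\chi}_\zeta(\gamma)\le (f-1)/(n-1)$ once $n$ is large.

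The first step reduces to partitions with $\zeta_1\ge \zeta_1'$. Since $\tilde{\chi}_{\zeta'}(\gamma)=\mathrm{sgn}(\gamma)\tilde{\chi}_\zeta(\gamma)$, conjugation changes at most the sign, and it suffices to bound $|\tilde{\chi}_\zeta(\gamma)|$ for these $\zeta$.

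The second step splits according to the size of $m=n-\zeta_1$.

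\emph{Small depth ($m\le m_0$).} Here I would use the character polynomial expansion: $\chi_\zeta(\gamma)$ is a polynomial in $c_1,c_2,\dots,c_m$ with leading term $\dim(\rho_{\bar\zeta})\binom{c_1}{m}$, where $\bar\zeta$ is $\zeta$ with its first row removed, and $\dim\rho_\zeta\approx \dim(\rho_{\bar\zeta})\, n^m/m!$. This gives $\tilde{\chi}_\zeta(\gamma)\approx (f/n)^m + O(n^{-1})$ terms. One must compare this carefully against $(f-1)/(n-1)$, including the case where $f$ is small and the lower-order contributions $c_2/n^2$ and similar terms could dominate. For $m=2,3$ the table entries give exact checks. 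For example, for $(n-2,2)$ we get $\tilde\chi=\bigl(f(f-3)+2c_2\bigr)/\bigl(n(n-3)\bigr)$, and $2c_2\le n-f$. This is at most $(f-1)/(n-1)$ when $f\ge2$ and $n$ is large, since $f(f-3)+(n-f)\le (f-1)n(n-3)/(n-1)$ holds for $2\le f\le n$. Similar bookkeeping works for $(n-2,1,1)$ and for each fixed $m$.

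\emph{Large depth ($m>m_0$).} Here I would invoke the Larsen--Shalev / Roichman type bound $|\tilde{\chi}_\zeta(\gamma)|\le \dim(\rho_\zeta)^{-\delta}$, or the Roichman estimate $|\tilde\chi_\zeta(\gamma)|\le \bigl(\max(q,\zeta_1/n,\zeta_1'/n)\bigr)^{c\,|\mathrm{supp}(\gamma)|}$. Combined with $\dim\rho_\zeta\ge \binom{n}{m}$-type growth, this gives a bound that beats $(f-1)/(n-1)\ge 1/(n-1)$ whenever $|\mathrm{supp}(\gamma)|$ is not tiny. When the support is tiny ($n-f$ bounded), the ratio is close to $1$ and we instead use the expansion $\tilde\chi_\zeta(\gamma)=1-(\text{const})\cdot(n-f)\cdot(\text{content sum})/\binom n2+\dots$ near the identity, where $(n-1,1)$ minimizes the content-sum decrease.

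The main obstacle is the intermediate regime: depth $m$ moderately large and $f$ a positive fraction of $n$. There $(f/n)^m$ is small but the error terms must be controlled uniformly. I would first handle it with the Féray--Śniady uniform bound $|\tilde\chi_\zeta(\gamma)|\le \bigl(a\max(\zeta_1/n,\zeta_1'/n, |\mathrm{supp}|/n)\bigr)^{|\mathrm{supp}(\gamma)|}$, supplemented by direct monotonicity in $m$ for fixed $f$. Since the statement is quoted from \cite{PP}, in the paper one may ultimately simply cite that proposition.
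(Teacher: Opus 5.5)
The paper gives no proof of this lemma: it is imported from \cite[Proposition 2.3]{PP}. Your closing remark (cite that proposition) is therefore exactly the paper's route, and the parts of your sketch that you actually carry out are correct. The reduction via $\tilde{\chi}_{\zeta'}(\gamma)=\mathrm{sgn}(\gamma)\tilde{\chi}_\zeta(\gamma)$ to the two-sided bound $|\tilde{\chi}_\zeta(\gamma)|\le (f-1)/(n-1)$ is valid, but it leaves no slack. If $f=2$ and the other $n-2$ points lie in $2$-cycles, Table~\ref{tab:tab1} gives $\tilde{\chi}_{(n-2,1,1)}(\gamma)=-1/(n-1)$ exactly. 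So depth $2$ has to be done exactly and on both sides. Your $(n-2,2)$ check covers only the upper side, though the lower side is immediate from $f(f-3)\ge -2$.

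As a self-contained proof, however, the sketch has a genuine gap at the junction between bounded and unbounded depth, and it is not where you place it.
\begin{itemize}
\item Your character-polynomial analysis is only available for $m\le m_0$ with $m_0$ fixed, since its error terms depend on $m$.
\item For $\zeta_1=n-m$, Roichman's estimate gives $|\tilde{\chi}_\zeta(\gamma)|\le (1-m/n)^{b(n-f)}\le e^{-bm(n-f)/n}$. For $f\le n/2$ this yields $|\tilde{\chi}_\zeta(\gamma)|\le (f-1)/(n-1)$ only once $m\gtrsim b^{-1}\log(n/f)$.
\item If $f\ge \alpha n$ for a fixed $\alpha>0$, that threshold is a constant, so a positive proportion of fixed points is actually the easy case.
\item The difficulty is $f=o(n)$. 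There the target $(f-1)/(n-1)$ tends to $0$, and the window $m_0<m\lesssim b^{-1}\log(n/f)$, of length up to order $\log n$, is covered neither by your fixed-depth expansion nor by Roichman.
\end{itemize}

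A bound $\dim(\rho_\zeta)^{-\delta}$ cannot close this window with $\delta$ independent of $\gamma$: for a transposition, $\tilde{\chi}_\zeta\approx 1-2m/n$ at depth $m$. You would have to say exactly how the Larsen--Shalev exponent depends on $f$ and on the cycle type of $\gamma$, and check that it beats $(f-1)/(n-1)$ throughout the window. A test case is $f\approx n/\log n$ with all other points in $2$-cycles, so that $\gamma$ has about $n/2$ cycles and the target is about $1/\log n$.

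The other two tools you mention do not fill the gap either. The F\'eray--\'Sniady bound is effective when $\zeta_1$, $\zeta_1'$ and the length of $\gamma$ are small compared with $n$, not for shapes with $\zeta_1=n-o(n)$. The ``direct monotonicity in $m$'' you invoke is not an available result and would itself need proof.

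Producing this uniform estimate is precisely the content of \cite[Proposition 2.3]{PP}. That is why the paper cites it rather than reproving it.
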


\section{Proof of Theorem \ref{thm:complement}}
\label{sec:complements}

\begin{proof}[Proof of Theorem~\ref{thm:complement}]
Let $n \geq 5$, and let $S$ be a nonempty normal set in $S_n$ with $S\neq S_n\setminus\{e\}$. Assume first that either $S$ or $S^c$ is equal to $A_n\setminus\{e\}$. Without loss of generality, we may assume that $S=A_n\setminus\{e\}$.
Then $\mathrm{Cay}(S_n,S)\cong 2K_{n!/2}$ has spectrum
\(
\left\{\left(\frac{n!}{2}-1\right)^2,(-1)^{n!-2}\right\}.
\)
The eigenvalue $\frac{n!}{2}-1$ is attained by both $\rho_{(n)}$ and $\rho_{(1^n)}$, while the strictly second largest eigenvalue $-1$ is attained by $\rho_{(n-1,1)}$. Since $S^c=S_n\setminus A_n$, we have $\mathrm{Cay}(S_n,S^c)\cong K_{n!/2,n!/2}$, and its spectrum is
\(
\left\{\left(\frac{n!}{2}\right)^1,0^{n!-2},\left(-\frac{n!}{2}\right)^1\right\},
\)
with the largest eigenvalue $\frac{n!}{2}$ and the smallest eigenvalue $-\frac{n!}{2}$ attained by $\rho_{(n)}$ and $\rho_{(1^n)}$, respectively. The strictly second largest eigenvalue of $\mathrm{Cay}(S_n,S^c)$ is $0$, which is attained by $\rho_{(n-1,1)}$.
Therefore, both $\mathrm{Cay}(S_n,S)$ and $\mathrm{Cay}(S_n,S^c)$ have the Aldous property.

From now on, assume that neither $S$ nor $S^c$ is equal to $A_n\setminus\{e\}$. We aim to show that at most one of $\mathrm{Cay}(S_n, S)$ and $\mathrm{Cay}(S_n, S^c)$ has the Aldous property. Without loss of generality, assume that $\mathrm{Cay}(S_n,S)$ has the Aldous property. We prove under this assumption that $\mathrm{Cay}(S_n,S^c)$ does not have the Aldous property.

The union of $\mathrm{Cay}(S_n,S)$ and $\mathrm{Cay}(S_n,S^c)$ is the complete graph $\mathrm{Cay}(S_n,S_n\setminus\{e\}) \cong K_{n!}$, which has eigenvalue $n!-1$ with multiplicity $1$ attained by the trivial representation $\rho_{(n)}$, and eigenvalue $-1$ with multiplicity $n!-1$ attained by all nontrivial irreducible representations of $S_n$. Therefore, by \eqref{eq:eigen-chara}, for each $(n)\ne\zeta\vdash n$,
\begin{equation}\label{complement-sum}
\lambda_{\zeta}(\mathrm{Cay}(S_n,S)) + \lambda_{\zeta}(\mathrm{Cay}(S_n,S^c))
=
\sum_{\sigma\in S}\tilde{\chi}_{\zeta}(\sigma)+\sum_{\sigma\in S^c}\tilde{\chi}_{\zeta}(\sigma)
=
\lambda_{\zeta}(\mathrm{Cay}(S_n,S_n\setminus\{e\}))
=-1.
\end{equation}

Since $n \ge 5$, the alternating group $A_n$ is the only non-trivial proper normal subgroup of $S_n$. Since $S$ and $S^c$ are nonempty normal sets, the subgroups $\langle S \rangle$ and $\langle S^c \rangle$ are normal in $S_n$, leaving $A_n$ and $S_n$ as the only possibilities. However, $\langle S \rangle$ and $\langle S^c \rangle$ cannot both be $A_n$, for otherwise we would have $S \cup S^c \subseteq A_n$, which contradicts $S \cup S^c = S_n \setminus \{e\}$. Therefore, exactly one of the following three cases must occur.

\smallskip
\textsf{Case 1.} $\langle S\rangle=S_n$ and $\langle S^c\rangle=S_n$.
\smallskip

In this case, both $\mathrm{Cay}(S_n,S)$ and $\mathrm{Cay}(S_n,S^c)$ are connected, and their largest eigenvalues are attained by the trivial representation $\rho_{(n)}$. Since both graphs have diameter at least $2$, it follows from \cite[Lemma 8.12.1]{GR2001} that each has at least three distinct eigenvalues. Since $\mathrm{Cay}(S_n,S)$ has the Aldous property by our assumption, its strictly second largest eigenvalue is
$\lambda_{(n-1,1)}(\mathrm{Cay}(S_n,S))$. Let $\lambda_{\beta}(\mathrm{Cay}(S_n,S))$ be the smallest eigenvalue of $\mathrm{Cay}(S_n,S)$, where $\beta\vdash n$ satisfies $\beta\notin\{(n),(n-1,1)\}$. Then
$$
\lambda_{(n-1,1)}(\mathrm{Cay}(S_n,S)) > \lambda_{\beta}(\mathrm{Cay}(S_n,S)).
$$
This together with \eqref{complement-sum} yields
\begin{align*}
\lambda_{\beta}(\mathrm{Cay}(S_n,S^c))
&=-\lambda_{\beta}(\mathrm{Cay}(S_n,S))-1 \\
&> -\lambda_{(n-1,1)}(\mathrm{Cay}(S_n,S))-1 \\
&= \lambda_{(n-1,1)}(\mathrm{Cay}(S_n,S^c)).
\end{align*}
Therefore, $\mathrm{Cay}(S_n,S^c)$ does not have the Aldous property.

\smallskip
\textsf{Case 2.} $\langle S\rangle=S_n$ and $\langle S^c\rangle=A_n$.
\smallskip

In this case, $\mathrm{Cay}(S_n,S)$ is connected, and $\mathrm{Cay}(S_n,S^c)$ is disconnected with two isomorphic connected components. The largest eigenvalue $|S|$ of $\mathrm{Cay}(S_n,S)$ has multiplicity $1$ and is attained by $\rho_{(n)}$. The largest eigenvalue $|S^c|$ of $\mathrm{Cay}(S_n,S^c)$ has multiplicity $2$ and is attained by both $\rho_{(n)}$ and $\rho_{(1^n)}$.

Since $S\neq S_n\setminus\{e\}$,  $\mathrm{Cay}(S_n,S)$ has diameter at least $2$. So, by \cite[Lemma 8.12.1]{GR2001}, $\mathrm{Cay}(S_n,S)$ has at least three distinct eigenvalues. Since $\mathrm{Cay}(S_n,S)$ has the Aldous property by our assumption, its strictly second largest eigenvalue is $\lambda_{(n-1,1)}(\mathrm{Cay}(S_n,S)).$

By \eqref{complement-sum}, among all partitions $\zeta\vdash n$ with $\zeta\ne (n)$, any representation attaining the largest eigenvalue of $\mathrm{Cay}(S_n,S^c)$ also attains the smallest eigenvalue of $\mathrm{Cay}(S_n,S)$. Since the largest eigenvalue $|S^c|$ of $\mathrm{Cay}(S_n,S^c)$ is attained by $\rho_{(1^n)}$, it follows that the smallest eigenvalue of $\mathrm{Cay}(S_n,S)$ is attained by the sign representation $\rho_{(1^n)}$.

If $\mathrm{Cay}(S_n,S)$ has at least four distinct eigenvalues, then there exists $\beta\vdash n$ with $\beta\notin\{(n),(n-1,1),(1^n)\}$ such that
\[
\lambda_{(n-1,1)}(\mathrm{Cay}(S_n,S)) > \lambda_{\beta}(\mathrm{Cay}(S_n,S)) > \lambda_{(1^n)}(\mathrm{Cay}(S_n,S)).
\]
This together with \eqref{complement-sum} implies that
\begin{align*}
\lambda_{\beta}(\mathrm{Cay}(S_n,S^c))
&=-\lambda_{\beta}(\mathrm{Cay}(S_n,S))-1 \\
&> -\lambda_{(n-1,1)}(\mathrm{Cay}(S_n,S))-1 \\
&= \lambda_{(n-1,1)}(\mathrm{Cay}(S_n,S^c)).
\end{align*}
Hence $\mathrm{Cay}(S_n,S^c)$ does not have the Aldous property.

If $\mathrm{Cay}(S_n,S)$ has exactly three distinct eigenvalues, then by \eqref{complement-sum} there are exactly two distinct values among the eigenvalues $\lambda_{\beta}(\mathrm{Cay}(S_n,S^c))$ with $\beta\neq (n)$. As mentioned above,
\(
\lambda_{(n)}(\mathrm{Cay}(S_n,S^c))=\lambda_{(1^n)}(\mathrm{Cay}(S_n,S^c)).
\)
Therefore, $\mathrm{Cay}(S_n,S^c)$ has exactly two distinct eigenvalues. Thus $\mathrm{Cay}(S_n,S^c)$ is the disjoint union of two cliques, which forces $S^c=A_n\setminus\{e\}$, contradicting our assumption. 

\smallskip
\textsf{Case 3.} $\langle S\rangle=A_n$ and $\langle S^c\rangle=S_n$.
\smallskip

In this case, $\mathrm{Cay}(S_n,S)$ is disconnected with two isomorphic connected components, and  $\mathrm{Cay}(S_n,S^c)$ is connected. The largest eigenvalue $|S|$ of $\mathrm{Cay}(S_n,S)$ has multiplicity $2$ and is attained by both $\rho_{(n)}$ and $\rho_{(1^n)}$. The largest eigenvalue $|S^c|$ of $\mathrm{Cay}(S_n,S^c)$ has multiplicity $1$ and is attained by $\rho_{(n)}$.

Since $S\neq A_n\setminus\{e\}$, $\mathrm{Cay}(S_n,S)$ is not the disjoint union of two cliques. Hence each connected component of $\mathrm{Cay}(S_n,S)$ has diameter at least $2$. Again, by \cite[Lemma 8.12.1]{GR2001}, $\mathrm{Cay}(S_n,S)$ has at least three distinct eigenvalues. Since $\mathrm{Cay}(S_n,S)$ has the Aldous property by our assumption, its strictly second largest eigenvalue is 
$\lambda_{(n-1,1)}(\mathrm{Cay}(S_n,S))$. Let $\lambda_{\beta}(\mathrm{Cay}(S_n,S))$ be the smallest eigenvalue of $\mathrm{Cay}(S_n,S)$, where $\beta\vdash n$ satisfies $\beta\notin\{(n),(n-1,1),(1^n)\}$. Then
\[
\lambda_{(n-1,1)}(\mathrm{Cay}(S_n,S)) > \lambda_{\beta}(\mathrm{Cay}(S_n,S)).
\]
Combining this with \eqref{complement-sum}, we obtain that
\begin{align*}
\lambda_{\beta}(\mathrm{Cay}(S_n,S^c))
&=-\lambda_{\beta}(\mathrm{Cay}(S_n,S))-1 \\
&> -\lambda_{(n-1,1)}(\mathrm{Cay}(S_n,S))-1 \\ 
&= \lambda_{(n-1,1)}(\mathrm{Cay}(S_n,S^c)).
\end{align*}
Hence $\mathrm{Cay}(S_n,S^c)$ does not have the Aldous property.

This proves Theorem~\ref{thm:complement}. 
\end{proof}

\section{Proofs of Theorem~\ref{thm:main1} and Corollary~\ref{An-sec-eigen}}
\label{supp-small}

\begin{proof}[Proof of Theorem~\ref{thm:main1}]
Let $N_0$ be the positive integer in Lemma~\ref{PPmax}. Let $N = \max\{N_0, 5\}$ and $n\ge N$. Let $S$ be a nonempty normal set in $S_n$ such that $2 \le |\mathrm{supp}(\sigma)| \le n-2$ for all  $\sigma\in S$. Since $S$ is normal, $\langle S\rangle$ is a nontrivial normal subgroup of $S_n$ and hence is either $S_n$ or $A_n$. Since $\mathrm{Cay}(S_n,S)$ is normal, by \eqref{eq:eigen-chara}, its eigenvalue corresponding to a partition $\beta\vdash n$ is
\[
\lambda_\beta
=
\sum_{\sigma \in S} \tilde{\chi}_\beta(\sigma).
\]

(a) Assume first that $O(S)=\emptyset$. Then $S\subseteq A_n$ and hence $\langle S\rangle=A_n$. Thus $\mathrm{Cay}(S_n,S)$ is disconnected with two isomorphic connected components. Consequently, the largest eigenvalue $|S|$ of $\mathrm{Cay}(S_n,S)$ has multiplicity $2$ and is attained by both $\rho_{(n)}$ and $\rho_{(1^n)}$. Therefore, the strictly second largest eigenvalue of $\mathrm{Cay}(S_n,S)$ can be expressed as
\begin{equation}\label{max-even}
\max_{\substack{\beta\vdash n\\ \beta\notin\{(n),(1^n)\}}}\lambda_\beta
=
\max_{\substack{\beta\vdash n\\ \beta\notin\{(n),(1^n)\}}}
\left(
\sum_{\sigma \in S} \tilde{\chi}_\beta(\sigma)
\right).
\end{equation}
By Lemma~\ref{PPmax}, the maximum on the right-hand side of \eqref{max-even} is attained by $\beta=(n-1,1)$. Therefore, $\mathrm{Cay}(S_n, S)$ has the Aldous property.

Now assume that $E(S)=\emptyset$. Then every permutation in $S$ is odd, and hence $\langle S\rangle=S_n$. Therefore, $\mathrm{Cay}(S_n, S)$ is connected, and its largest eigenvalue $|S|$ has multiplicity $1$ and is attained by the trivial representation $\rho_{(n)}$.
Furthermore, the eigenvalue of $\mathrm{Cay}(S_n, S)$ corresponding to the sign representation $\rho_{(1^n)}$ is $-|S|$. So, by the Perron--Frobenius theorem \cite[Theorem 8.4.4]{HJ1}, $-|S|$ is the smallest eigenvalue of $\mathrm{Cay}(S_n, S)$. Therefore, the strictly second largest eigenvalue of $\mathrm{Cay}(S_n, S)$ is again given by \eqref{max-even}. Again, by Lemma~\ref{PPmax}, the maximum on the right-hand side of \eqref{max-even} is attained by $\beta=(n-1,1)$ and therefore $\mathrm{Cay}(S_n, S)$ has the Aldous property.

(b) Assume that $E(S)\ne\emptyset$ and $O(S)\ne\emptyset$. Then $\langle S\rangle=S_n$ and hence $\mathrm{Cay}(S_n, S)$ is connected. The largest eigenvalue $|S|$ of $\mathrm{Cay}(S_n,S)$ has multiplicity $1$ and is attained by the trivial representation $\rho_{(n)}$. The eigenvalue of $\mathrm{Cay}(S_n,S)$ corresponding to the sign representation $\rho_{(1^n)}$ is
\begin{equation}
\label{eq:sign}
\lambda_{(1^n)}
=
\sum_{\sigma\in E(S)}\tilde{\chi}_{(1^n)}(\sigma)
+
\sum_{\sigma\in O(S)}\tilde{\chi}_{(1^n)}(\sigma)
=
|E(S)|-|O(S)|.
\end{equation}
We have
\begin{equation}\label{max-mix}
\max_{\substack{\beta\vdash n\\ \beta\notin\{(n),(1^n)\}}}\lambda_\beta
=
\max_{\substack{\beta\vdash n\\ \beta\notin\{(n),(1^n)\}}}
\left(
\sum_{\sigma \in S} \tilde{\chi}_\beta(\sigma)
\right).
\end{equation}
By Lemma~\ref{PPmax}, the maximum on the right-hand side of \eqref{max-mix} is attained by $\beta=(n-1,1)$. Thus,
\begin{equation}
\label{eq:two}
\lambda_2(\mathrm{Cay}(S_n, S))=\max\{\lambda_{(n-1,1)},\lambda_{(1^n)}\}.
\end{equation}
It follows that $\mathrm{Cay}(S_n, S)$ has the Aldous property if and only if $\lambda_{(n-1,1)} \geq \lambda_{(1^n)}$. Since $\lambda_{(1^n)}
= |E(S)|-|O(S)|$ and 
$$
\lambda_{(n-1,1)}
=
\sum_{\sigma\in S}\tilde{\chi}_{(n-1,1)}(\sigma)
=
\sum_{\sigma\in S}\frac{|\fix(\sigma)|-1}{n-1},
$$
we obtain that $\mathrm{Cay}(S_n, S)$ has the Aldous property if and only if $$
\sum_{\sigma\in S}\frac{|\fix(\sigma)|-1}{n-1} \ge |E(S)| -|O(S)|,
$$
or, equivalently,
$$
\sum_{\sigma\in S} |\fix(\sigma)| \ge n|E(S)|-(n-2)|O(S)|.
$$
Moreover, by \eqref{eq:two}, if $\mathrm{Cay}(S_n, S)$ does not have the Aldous property, then its strictly second largest eigenvalue is attained by the sign representation $\rho_{(1^n)}$ of $S_n$. 
\end{proof}

\begin{proof}[Proof of Corollary~\ref{An-sec-eigen}]
Let $N \ge 5$ be the positive integer in Theorem~\ref{thm:main1}. Let $n\geq N$, and let $S$ be a nonempty normal set in $S_n$ such that $S\subseteq A_n$ and $2\leq |\mathrm{supp}(\sigma)|\leq n-2$ for every $\sigma\in S$.

Since $S\subseteq A_n$, part (a) of Theorem~\ref{thm:main1} implies that $\mathrm{Cay}(S_n,S)$ has the Aldous property. Hence its strictly second largest eigenvalue is attained by $\rho_{(n-1,1)}$. As $\mathrm{Cay}(S_n,S)$ is normal, \eqref{eq:eigen-chara} and Table~\ref{tab:tab1} show that this eigenvalue is
\[
\lambda_{(n-1,1)}
=
\sum_{\sigma \in S} \tilde{\chi}_{(n-1,1)}(\sigma)
=
\sum_{\sigma \in S} \frac{|\fix(\sigma)|-1}{n-1}
=
\frac{1}{n-1} \left( \sum_{\sigma \in S} |\fix(\sigma)| - |S| \right).
\]

Since $S$ is a nonempty normal set consisting of nonidentity elements, $\langle S\rangle$ is a nontrivial normal subgroup of $S_n$. Moreover, $\langle S\rangle\subseteq A_n$. As $n\geq 5$, it follows that $\langle S\rangle=A_n$. Therefore, $\mathrm{Cay}(S_n,S)$ has exactly two connected components, induced on $A_n$ and $S_n\setminus A_n$, respectively, and each component is isomorphic to $\mathrm{Cay}(A_n,S)$.
\end{proof}

\section{Normal Cayley graphs on symmetric groups with strictly second largest eigenvalue at most 1}\label{line-graph}

This section is a preparation for our proof of Theorem \ref{thm:main2} to be given in the next section. However, the main results in this section, Theorems \ref{class-linegraph} and \ref{sn-cay-sec-1}, are also of interest for their own sake. 

\subsection{Normal Cayley graphs on $S_n$ that are line graphs}

The \emph{line graph} of a graph $\Gamma$ is the graph whose vertices correspond to the edges of $\Gamma$, where two vertices are adjacent if and only if the corresponding edges in $\Gamma$ share exactly one end-vertex. A graph is called a \emph{line graph} if it is isomorphic to the line graph of some graph. The following result gives a classification of normal Cayley graphs on $S_n$ that are line graphs. 

\begin{thm}
\label{class-linegraph}
Let $n\ge 2$, and let $S$ be a nonempty normal set in $S_n$. Then $\mathrm{Cay}(S_n,S)$ is a line graph if and only if one of the following holds:
	\begin{enumerate}[\rm (a)]
		\item $S=S_n\setminus\{e\}$, and hence $\mathrm{Cay}(S_n,S)\cong K_{n!}$;
		\item $S=A_n\setminus\{e\}$, and hence $\mathrm{Cay}(S_n,S)\cong 2K_{n!/2}$;
		\item $n=4$ and $S=\{(1\,2)(3\,4), (1\,3)(2\,4), (1\,4)(2\,3)\}$, and hence $\mathrm{Cay}(S_n,S)\cong 6K_4$.
	\end{enumerate}
\end{thm}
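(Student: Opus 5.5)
The plan is to exploit the rigid clique structure of line graphs (Krausz's characterization) together with the fact that conjugation by $S_n$ acts on $\mathrm{Cay}(S_n,S)$ as a group of automorphisms fixing $e$.

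\emph{Sufficiency and small cases.} Sufficiency is immediate: $K_{n!}=L(K_{1,n!})$, $2K_{n!/2}$ is the line graph of two stars, and $6K_4$ is the line graph of six copies of $K_{1,4}$. For $n\le 4$ there are only finitely many normal sets, so I will check them directly. The main check is that for $n=4$ the only extra line graph comes from $V_4\setminus\{e\}$, which gives $6K_4$. Every other normal set should yield an induced claw $K_{1,3}$: three elements $a,b,c\in S$ with $a^{-1}b,\ a^{-1}c,\ b^{-1}c\notin S$.

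\emph{Setup for $n\ge 5$.} Assume $\Gamma=\mathrm{Cay}(S_n,S)=L(H)$ for some graph $H$. By Krausz's theorem, $E(\Gamma)$ is partitioned into cliques (the stars of $H$) with every vertex in at most two of them. I take the clique through $e$ to include $e$ itself. The neighbourhood $S$ of $e$ is then $(X_1\setminus\{e\})\cup(X_2\setminus\{e\})$, where $X_1,X_2$ are the Krausz cliques at $e$ and $X_1\cap X_2=\{e\}$. For $n\ge 5$ the graph has $n!\ge 120$ vertices and is connected or has two large components. The exceptional graphs with non-unique Krausz cover (small graphs such as $K_3$, $K_4-e$, and the line graphs of $K_{1,3}+e$ and $K_4$) can therefore occur only as components of tiny size. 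Ruling them out carefully should make the Krausz cover, i.e.\ the root graph $H$, unique; this is Whitney's theorem.

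\emph{Invariance.} Given uniqueness, every automorphism of $\Gamma$ permutes the Krausz cliques. Two families of automorphisms are relevant:
\begin{itemize}
\item conjugation $x\mapsto g^{-1}xg$, which fixes $e$, so the subgroup of index at most $2$ in $S_n$ stabilizing $X_1$ contains $A_n$;
\item left translations $x\mapsto yx$.
\end{itemize}
Now take $x\in X_1\setminus\{e\}$. Left translation by $x^{-1}$ sends $X_1$ to a Krausz clique containing $e$ and $x^{-1}$. I will argue that this clique is $X_1$ again: $x^{-1}$ is conjugate to $x$, and $X_1$ is $A_n$-invariant up to the swap with $X_2$, so if necessary I compare class by class. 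This gives $x^{-1}X_1=X_1$ for all $x\in X_1$, so $X_1$ is a subgroup of $S_n$ normalized by $A_n$, and likewise for $X_2$. For $n\ge 5$ such a subgroup is $\{e\}$, $A_n$ or $S_n$. Since $X_1\cap X_2=\{e\}$, at most one of them is nontrivial. Hence $S=A_n\setminus\{e\}$ or $S=S_n\setminus\{e\}$.

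\emph{Main obstacle.} I expect the hard step to be the bookkeeping of the possible swap $X_1\leftrightarrow X_2$. If conjugation or the inverse map interchanges the two cliques, then the step $x^{-1}X_1=X_1$ fails. In that case I would try a direct claw argument instead. Take $x\in X_1\setminus\{e\}$ and $y\in X_2\setminus\{e\}$, which are nonadjacent by the Krausz property. Using the $S_n$-conjugacy of $S$, I would then produce a third element of $S$ nonadjacent to both $x$ and $y$, i.e.\ a claw, giving a contradiction.
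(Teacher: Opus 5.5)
\paragraph{Verdict.} Your route differs from the paper's, and its core works, but the proposal has two genuine gaps: the swap case is left open, and the plan for $n\le 4$ rests on a false claim.

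\paragraph{Comparison and the swap case.} The paper applies Beineke's forbidden subgraphs $H_1=K_5-e$ and the claw $H_2$ directly, with an induction on support size. Your Krausz--Whitney argument is sound whenever conjugation fixes each Krausz clique $X_1,X_2$ at $e$: then $x^{-1}\in X_1$, the edge $\{e,x^{-1}\}$ lies in a unique Krausz clique, so $x^{-1}X_1=X_1$, and $X_1,X_2$ are normal subgroups meeting in $\{e\}$.

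The swap case, however, is not handled, and your fallback starts from a false premise. The Krausz property does not make $x\in X_1\setminus\{e\}$ and $y\in X_2\setminus\{e\}$ nonadjacent. If $e$ is the root edge $uv$, then $x=uw$ and $y=vw$ are adjacent whenever $uvw$ is a triangle of the root graph. You only know that the edges between $X_1\setminus\{e\}$ and $X_2\setminus\{e\}$ form a matching.

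The case can be closed as follows.
\begin{enumerate}[\rm (i)]
\item If an odd permutation interchanges $X_1$ and $X_2$, the conjugation-stabilizer of $X_1$ is exactly $A_n$. Since $X_1\cap X_2=\{e\}$, every $S_n$-class in $S$ splits into two $A_n$-classes.
\item Hence every $\sigma\in S$ has distinct odd cycle lengths. For $n\ge 5$ this means at most one fixed point and a cycle of length at least $5$.
\item Take $x\in X_1\setminus\{e\}$ and $h=(a\,b\,c)$ with $a,b,c$ consecutive on such a cycle of $x$. Then $y=h^{-1}xh$ is $A_n$-conjugate to $x$, so $y\in X_1$ and $x\sim y$.
\item But $x^{-1}h^{-1}x$ is a $3$-cycle whose support is $\{a,b,c\}$ shifted one step along that cycle. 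So $x^{-1}y=(x^{-1}h^{-1}x)h$ is a nonidentity permutation with support of size at most $4$.
\item Such a permutation cannot have distinct odd cycle lengths when $n\ge 5$: its only nontrivial cycle would be a $3$-cycle, leaving $n-3\ge 2$ fixed points. So $x^{-1}y\notin S$, a contradiction.
\end{enumerate}

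\paragraph{The cases $n\le 4$.} Your plan claims every remaining normal set yields an induced claw; this is false. If $S^c$ consists only of odd permutations, three leaves $a,b,c$ of a claw would need $a^{-1}b,\,b^{-1}c,\,a^{-1}c\in S^c$. That is impossible, because $a^{-1}c=(a^{-1}b)(b^{-1}c)$ is even. So $\mathrm{Cay}(S_n,S)$ is then claw-free.

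For $n=4$ this gives two non-line graphs with no claw: $S=S_4\setminus(\{e\}\cup C(S_4,(4)))$ and $S=S_4\setminus(\{e\}\cup C(S_4,(2,1,1)))$. There you need another obstruction, such as an induced $K_5-e$. This is exactly what Lemmas~\ref{2-cycle-linegraph} and \ref{3-cycle-linegraph} supply, and the paper's argument is uniform for all $n\ge 2$, whereas yours needs $n\ge 5$.

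\paragraph{What your approach buys.} With these repairs, your approach gives a shorter, more conceptual proof for $n\ge 5$. The cost is that you must invoke the strong form of Whitney's theorem: every isomorphism between line graphs of large connected graphs is induced by a root isomorphism. Uniqueness of the root graph up to isomorphism alone is not enough for the automorphism argument.
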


Our proof of Theorem \ref{class-linegraph} is based on Beineke's characterization of line graphs. In 1970, he proved \cite{Beineke1970} that a graph is a line graph if and only if it does not contain any one of the nine forbidden graphs as an induced subgraph. For our purposes, we only show two of these nine forbidden subgraphs, denoted by $H_1$ and $H_2$, in Figure~\ref{fig:H1H2}. 

\begin{figure}[htbp]
    \centering
    \captionsetup[subfigure]{labelformat=empty}
    \begin{subfigure}[b]{0.45\textwidth}
    \centering
    \begin{tikzpicture}[dot/.style={circle, fill=black, inner sep=1.5pt}, scale=1.2, transform shape]
        \node[dot, label=left:{$\delta$}] (e) at (0,0) {}; 
        \node[dot, label=above:{$\tau_1$}] (t1) at (1.5, 1) {};
        \node[dot, label=right:{$\tau_2$}] (t2) at (2, 0) {};
        \node[dot, label=below:{$\tau_3$}] (t3) at (1.5, -1) {};
        \node[dot, label=right:{$\omega$}] (s) at (3.5, 0) {}; 
        
        \draw (t1)--(t2)--(t3)--(t1); 
        \draw (e)--(t1) (e)--(t2) (e)--(t3); 
        \draw (s)--(t1) (s)--(t2) (s)--(t3); 
    \end{tikzpicture}
    \caption{$H_1$}
    \label{fig:sub-a}
\end{subfigure}
    \hfill 
    \begin{subfigure}[b]{0.45\textwidth}
        \centering
        \begin{tikzpicture}[dot/.style={circle, fill=black, inner sep=1.5pt}, scale=1.2, transform shape]
            \node[dot, label=left:{$\delta$}] (e) at (0,0) {};
            \node[dot, label=right:{$\tau_1$}] (t1) at (1.5, 1) {};
            \node[dot, label=right:{$\tau_2$}] (t2) at (1.5, 0) {};
            \node[dot, label=right:{$\tau_3$}] (t3) at (1.5, -1) {};
            
            \draw (e)--(t1);
            \draw (e)--(t2);
            \draw (e)--(t3);
        \end{tikzpicture}
        \caption{$H_2$}
        \label{fig:sub-b}
    \end{subfigure}

    \vfill 
    \caption{Two forbidden subgraphs for line graphs}
    \label{fig:H1H2}
\end{figure}

The following lemma deals with the case when the connection set contains all transpositions. In its proof we write $x\sim y$ to denote that $x$ and $y$ are adjacent in the Cayley graph $\mathrm{Cay}(S_n, S)$ under consideration.

\begin{lem}\label{2-cycle-linegraph}
Let $n \ge 2$, and let $S$ be a nonempty normal set in $S_n$ which contains the conjugacy class of all transpositions. Then $\mathrm{Cay}(S_n, S)$ is a line graph if and only if $S = S_n \setminus \{e\}$, or equivalently, $\mathrm{Cay}(S_n, S) \cong K_{n!}$.
\end{lem}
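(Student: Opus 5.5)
The direction ``if'' is immediate: when $S=S_n\setminus\{e\}$ we have $\mathrm{Cay}(S_n,S)\cong K_{n!}$, and $K_{n!}$ is the line graph of the star $K_{1,n!}$. For ``only if'' I will argue by contrapositive. Assume $S\supseteq \mathrm{Supp}_n(2)$ and $S\neq S_n\setminus\{e\}$. I will exhibit an induced copy of $H_1$ or $H_2$ in $\mathrm{Cay}(S_n,S)$, which by Beineke's theorem \cite{Beineke1970} shows it is not a line graph. Since $\mathrm{Cay}(S_n,S)$ is vertex-transitive, it suffices to look near the identity $e$.

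The organizing quantity is the transposition length $\ell(\pi)=n-c(\pi)$, where $c(\pi)$ is the number of cycles of $\pi$ (fixed points included). Choose $\pi\notin S\cup\{e\}$ with $k=\ell(\pi)$ minimal. Then every nonidentity permutation of length less than $k$ lies in $S$. Since $S$ contains all transpositions, $k\ge 2$.

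\textbf{Case $k\ge 3$.} I will build $H_1$ with $\delta=e$ and $\omega=\pi$; these are non-adjacent because $\pi\notin S$. For a transposition $\tau$ whose two points lie in a common cycle of $\pi$, we have $\ell(\tau\pi)=k-1$, so $\tau\pi\in S$ and $\tau$ is a common neighbour of $e$ and $\pi$. The number of such transpositions is $\sum_i\binom{l_i}{2}$ over the cycle lengths $l_i$ of $\pi$. This is at least $\sum_i(l_i-1)=k\ge 3$, so three distinct ones $\tau_1,\tau_2,\tau_3$ exist. For $i\ne j$, $\tau_i\tau_j$ is a nonidentity permutation with $\ell\le 2<k$, hence it lies in $S$, and the $\tau_i$ form a triangle. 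All required adjacencies and non-adjacencies of $H_1$ then hold, and the subgraph on these five vertices is induced.

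\textbf{Case $k=2$.} Here $\pi$ is a $3$-cycle or a product of two disjoint transpositions, and I look for a claw $H_2$ centred at $e$.
\begin{itemize}
\item If the $3$-cycles are not in $S$, take $(1\,2),(1\,3),(2\,3)$. Their pairwise products are $3$-cycles, so these three neighbours of $e$ are pairwise non-adjacent and give $H_2$ (this needs only $n\ge 3$; for $n=2$ the hypothesis forces $S=S_n\setminus\{e\}$).
\item If the $3$-cycles lie in $S$ but the double transpositions do not (so $n\ge 4$), and $n\ge 6$, take $(1\,2),(3\,4),(5\,6)$. Their pairwise products are double transpositions, which gives a claw.
\end{itemize}

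The main obstacle is the subcase $n\in\{4,5\}$ with $3$-cycles in $S$ and double transpositions missing. There I will need a short hand search for either a claw or a copy of $H_1$. The first attempt is $H_1$ with $\delta=e$, $\omega=(1\,2)(3\,4)$, using common neighbours chosen among the $3$-cycles and $4$-cycles. Since $S$ is a union of conjugacy classes, I will track which of these classes lie in $S$. If that fails, I will try a claw at $e$ formed by a transposition together with suitable $3$-cycles and $4$-cycles whose pairwise quotients are double transpositions or other classes absent from $S$. This is a finite check over the few possible normal sets in $S_4$ and $S_5$, and could be done by direct case enumeration.
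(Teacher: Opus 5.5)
Your $k\ge 3$ case is correct. Ordering by transposition length $\ell(\pi)=n-c(\pi)$, rather than by support size as the paper does, is a tidier device: the three middle vertices of $H_1$ can always be taken to be transpositions that split cycles of $\pi$. The paper instead needs one $H_1$ configuration for $\sigma$ with a cycle of length at least $3$, and a second one, built from $(1\,2\,3)\sigma'$, $(1\,2\,3\,4)\sigma'$, $(1\,2\,4\,3)\sigma'$, for $\sigma$ a product of $2$-cycles. The price of your ordering is that double transpositions have length $2$ and fall into your base case. There the proposal has a hole: for $n\in\{4,5\}$, with all $3$-cycles in $S$ and no double transposition in $S$, you give no argument, only a plan for a search. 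This is not a formality, because the construction you name first cannot work for $n=4$ and $S=\mathrm{Supp}_4(\{2,3\})$. No $4$-cycles are available as common neighbours. Also, no three $3$-cycles of $S_4$ are pairwise adjacent: for $3$-cycles $a,b$, the quotient $a^{-1}b$ is a $3$-cycle only when $a$ and $b$ lie in different cosets of the Klein four-subgroup of $A_4$, and the $3$-cycles occupy only two such cosets.

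The case is easy to close, uniformly for all $n\ge 4$. The $3$-cycles $(1\,2\,3)$, $(1\,3\,4)$, $(2\,4\,3)$ lie in $S$, and their pairwise quotients are double transpositions, so together with $e$ they induce $H_2$. This is exactly the configuration the paper uses in Lemma~\ref{3-cycle-linegraph}. Alternatively, take $\delta=e$, $\omega=(1\,2)(3\,4)$, $\tau_1=(1\,2)$, $\tau_2=(1\,2\,3)$, $\tau_3=(1\,2\,4)$. These induce $H_1$, since every $\tau_i^{-1}\tau_j$ and every $\tau_i^{-1}\omega$ is a transposition or a $3$-cycle, while $\omega\notin S$. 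Either choice also makes your separate $n\ge 6$ argument with $(1\,2),(3\,4),(5\,6)$ unnecessary. With this case filled in, your proof is complete.
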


\begin{proof}
The sufficiency is obvious. It remains to prove the necessity.

Suppose that $\mathrm{Cay}(S_n, S)$ is a line graph. Then by Beineke's characterization it does not contain $H_1$ or $H_2$ (see Figure~\ref{fig:H1H2}) as an induced subgraph.
We aim to prove that every permutation $\sigma \in S_n$ with $|\mathrm{supp}(\sigma)| \ge 2$ belongs to $S$. We prove this statement by induction on $|\mathrm{supp}(\sigma)|$. If $|\mathrm{supp}(\sigma)|=2$, then the statement follows from the assumption that $S$ contains all transpositions. If $|\mathrm{supp}(\sigma)|=3$, then $\sigma \in S$, for otherwise an induced subgraph of $\mathrm{Cay}(S_n, S)$ isomorphic to $H_2$ can be easily found by taking $\delta=e$, $\tau_1=(1\,2)$, $\tau_2=(2\,3)$, and $\tau_3=(1\,3)$ in Figure~\ref{fig:H1H2}.

Assume inductively that for some $k \ge 4$ every permutation in $S_n$ with support size at most $k-1$ belongs to $S$. We now show that for an arbitrary permutation $\sigma \in S_n$ with $|\mathrm{supp}(\sigma)|=k$ we must have $\sigma \in S$. Suppose first that the cycle decomposition of $\sigma$ contains a cycle of length at least $3$. Then we may write without loss of generality that
\(
\sigma=(1\,2\,\cdots\,t)\sigma'
\)
for some $t\ge 3$ and $\sigma' \in S_n$. Set
\[
\delta=e,\qquad
\tau_1=(2\,3\,\cdots\,t)\sigma',\qquad
\tau_2=(1\,3\,\cdots\,t)\sigma',\qquad
\tau_3=(3\,\cdots\,t)\sigma',\qquad
\omega=\sigma.
\]
By the induction hypothesis, $e \sim \tau_i$ for $1\le i\le 3$. Moreover, a direct computation shows that $\tau_i^{-1}\tau_j$ for $1\le i<j\le 3$ and $\tau_i^{-1}\sigma$ for $1\le i\le 3$ are $2$-cycles or $3$-cycles. Since by our induction hypothesis all $2$-cycles and $3$-cycles belong to $S$, we have $\tau_i \sim \sigma$ for $1\le i\le 3$ and $\tau_i\sim\tau_j$ for all $1\le i<j\le 3$. Since $\mathrm{Cay}(S_n,S)$ does not contain an induced subgraph isomorphic to $H_1$, we must have $e\sim\sigma$ and therefore $\sigma\in S$.

It remains to consider the case when the cycle decomposition of $\sigma$ contains only cycles of lengths $1$ and $2$. Since $|\mathrm{supp}(\sigma)| = k \ge 4$, we may assume without loss of generality that
\(
\sigma=(1\,2)(3\,4)\sigma'
\)
for some $\sigma' \in S_n$. Let
\[
\delta=e,\qquad
\tau_1=(1\,2\,3)\sigma',\qquad
\tau_2=(1\,2\,3\,4)\sigma',\qquad
\tau_3=(1\,2\,4\,3)\sigma',\qquad
\omega=\sigma.
\]
By what we proved in the previous paragraph, we have $e\sim\tau_i$ for $1\le i\le 3$. Moreover, a direct computation shows that $\tau_i^{-1}\tau_j$ for $1\le i<j\le 3$ and $\tau_i^{-1}\sigma$ for $1\le i\le 3$ have supports of size at most $3$. Since by our induction hypothesis all $2$-cycles and $3$-cycles belong to $S$, we have $\tau_i \sim \sigma$ for $1\le i\le 3$ and $\tau_i\sim\tau_j$ for all $1\le i<j\le 3$. Since $\mathrm{Cay}(S_n,S)$ does not contain an induced subgraph isomorphic to $H_1$, we must have $e\sim\sigma$ and hence $\sigma\in S$.

So, by mathematical induction, we have proved that $S$ contains every permutation in $S_n$ with support size at least $2$. Therefore,
\(
S=S_n\setminus\{e\}, 
\)
or equivalently,
\(
\mathrm{Cay}(S_n,S) \cong K_{n!}.
\)
\end{proof}

The next lemma deals with the case when the connection set does not contain the conjugacy class of transpositions but contains the conjugacy class of $3$-cycles.

\begin{lem}\label{3-cycle-linegraph}
Let $n\ge 3$, and let $S$ be a nonempty normal set in $S_n$ which contains the conjugacy class of $3$-cycles but not the conjugacy class of transpositions. Then $\mathrm{Cay}(S_n,S)$ is a line graph 
if and only if $S=A_n\setminus\{e\}$, in which case $\mathrm{Cay}(S_n,S) \cong 2 K_{n!/2}$.
\end{lem}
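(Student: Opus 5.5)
Sufficiency is immediate: for $S=A_n\setminus\{e\}$ the graph is $2K_{n!/2}$, which is the line graph of $2K_{1,n!/2}$. For necessity, I will mirror the proof of Lemma~\ref{2-cycle-linegraph}. Assume $\mathrm{Cay}(S_n,S)$ is a line graph, so it has no induced $H_1$ or $H_2$. I then aim to prove two things: every even permutation with support size at least $3$ lies in $S$, and no odd permutation lies in $S$. For $n\le 4$ the few conjugacy classes can be checked by hand, so the main case is $n\ge5$.

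\emph{Even permutations, by induction on support size.} The base cases are the $3$-cycles, which lie in $S$ by hypothesis. Let $\sigma$ be even with $|\mathrm{supp}(\sigma)|=k\ge 4$, and suppose all even permutations of smaller support lie in $S$. I will look for an $H_1$-configuration with $\delta=e$ and $\omega=\sigma$. This needs $\tau_1,\tau_2,\tau_3$ that are even with support $<k$ (or support $3$), such that all $\tau_i^{-1}\tau_j$ and $\tau_i^{-1}\sigma$ are even with support less than $k$. Then $\tau_1,\tau_2,\tau_3$ form a triangle, each is adjacent to both $e$ and $\sigma$, and forbidding an induced $H_1$ forces $e\sim\sigma$, that is, $\sigma\in S$. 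The natural choice is $\tau_i=\sigma\pi_i$ with $\pi_i$ ranging over the three $3$-cycles on a fixed $4$-subset of points. Then $\tau_i^{-1}\sigma=\pi_i^{-1}$ is a $3$-cycle, and $\tau_i^{-1}\tau_j=\pi_i^{-1}\pi_j$ is even with support at most $4$. I choose the $4$-subset inside $\mathrm{supp}(\sigma)$ so that each $\sigma\pi_i$ has strictly smaller support. This works when $\sigma$ contains a long enough cycle, or a pair of transpositions $(a\,b)(c\,d)$, since multiplying by a suitable $3$-cycle fixes extra points. When $k=4$, the products $\pi_i^{-1}\pi_j$ may have support $4$. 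In that case I handle the two classes $(a\,b\,c)(\ldots)$ and $(a\,b)(c\,d)$ directly, using only $3$-cycles as the $\tau_i$ (for $(1\,2)(3\,4)$, take $\tau$'s among $3$-cycles on $\{1,2,3,4\}$ whose pairwise quotients are $3$-cycles). I expect this case analysis, choosing three $\tau$'s that are pairwise adjacent and all adjacent to $\sigma$ while reducing support, to be the main technical obstacle, exactly as the corresponding computations were in Lemma~\ref{2-cycle-linegraph}.

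\emph{Excluding odd permutations.} Once $A_n\setminus\{e\}\subseteq S$, suppose $S$ contains some odd class $C$. Since transpositions are not in $S$, $C$ is not the transposition class. I look for an induced $H_2$ (a claw) centred at $e$. Take $\tau_1=\tau\in C$, where $\tau=(1\,2)\tau'$, and take $\tau_2,\tau_3$ even elements chosen so that $\tau_2^{-1}\tau_3$, $\tau_1^{-1}\tau_2$ and $\tau_1^{-1}\tau_3$ are all outside $S$. Two even elements are always adjacent, which would break the claw, so instead I use two elements of $C$ and one even element. Write $\tau_1=\tau$ and $\tau_2=\tau(1\,2)$'s conjugate, arranged so that $\tau_1^{-1}\tau_2$ is a transposition. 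Then choose $\tau_3$ even with $\tau_1^{-1}\tau_3$ and $\tau_2^{-1}\tau_3$ odd transpositions as well. Concretely, with $x\in C$ I take $\tau_1=x$, $\tau_2=x(a\,b)$, $\tau_3=x(a\,c)$, where $a,b,c$ are chosen so that $x(a\,b)$ and $x(a\,c)$ are even permutations with support at least $3$, hence lie in $S$. Then $\tau_1^{-1}\tau_2$ and $\tau_1^{-1}\tau_3$ are transpositions, so those pairs are non-adjacent. However, $\tau_2^{-1}\tau_3=(a\,b)(a\,c)$ is a $3$-cycle, so $\tau_2$ and $\tau_3$ are adjacent. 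I therefore expect to instead obtain an odd-cycle/triangle-type forbidden configuration (again $H_1$ with $\omega$ chosen odd) or to adjust so that $\tau_2,\tau_3$ differ by a transposition, e.g.\ $\tau_2=x(a\,b)$, $\tau_3=x(c\,d)$ with disjoint transpositions, whose quotient $(a\,b)(c\,d)$ is even and hence adjacent, so instead pick $\tau_3=e$-based elements from $C$. The cleanest plan is to take $\delta=e$ and $\tau_1,\tau_2,\tau_3$ all in $C$ with pairwise quotients transpositions, namely $x$, $x(a\,b)$ is even, so use $\delta=(a\,b)$-shifted: $\delta=e$, $\tau_i=y\,t_i$ where $y$ is even in $S$ and $t_1,t_2,t_3$ are three transpositions whose pairwise products are $3$-cycles. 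This still fails, so the final choice will be made by a short explicit search, which I regard as routine. The conclusion is $S=A_n\setminus\{e\}$.
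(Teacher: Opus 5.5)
There are two genuine gaps: the base case $k=4$, where the configuration you propose cannot exist, and the exclusion of odd classes, which you never actually prove.

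\textbf{The base case $k=4$.} Your inductive step for $k\ge 5$ is sound, and it is a tidier uniform version of a cycle-type case split. For instance, if $\sigma^{-1}(a)=b$, $\sigma^{-1}(b)=c$ and $d\in\mathrm{supp}(\sigma)\setminus\{a,b,c\}$, then the $3$-cycles $(a\,b\,c),(a\,b\,d),(b\,c\,d)$ each make $\sigma\pi_i$ gain a fixed point. But that step needs the case $k=4$, i.e.\ $\sigma=(1\,2)(3\,4)$, and there your configuration does not exist. The eight $3$-cycles on $\{1,2,3,4\}$ form the two nontrivial cosets of $V=\{e,(1\,2)(3\,4),(1\,3)(2\,4),(1\,4)(2\,3)\}$ in $A_4$. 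The quotient $\tau_i^{-1}\tau_j$ is a $3$-cycle exactly when $\tau_i,\tau_j$ lie in different cosets, so among any three of them, two have a double-transposition quotient. Using points outside $\{1,2,3,4\}$ does not help. If $\tau$ and $\tau^{-1}(1\,2)(3\,4)$ are both $3$-cycles, their supports must meet in exactly two points, which forces $\mathrm{supp}(\tau)\subseteq\{1,2,3,4\}$. At this stage only $3$-cycles are known to lie in $S$, so no induced-$H_1$ argument with $\delta=e$, $\omega=\sigma$ can work. The missing idea is to turn this coset structure into a claw. If double transpositions are not in $S$, then $(1\,2\,3),(1\,3\,4),(2\,4\,3)$, which lie in one coset, are all adjacent to $e$ and pairwise non-adjacent. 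So $\{e,(1\,2\,3),(1\,3\,4),(2\,4\,3)\}$ induces $H_2$.

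\textbf{Excluding odd classes.} You end this part with an unspecified ``explicit search''. Your claw attempts were bound to fail. Once $A_n\setminus\{e\}\subseteq S$, any two vertices of the same parity are adjacent, so the graph has no independent set of size $3$ and hence no induced $H_2$ at all. The right configuration is the one you mention in passing and then drop: $H_1$ with an odd $\omega$.
\begin{itemize}
\item Take $\delta=e$ and $\omega=(1\,2)$; these are non-adjacent because transpositions are not in $S$.
\item Let $\tau_1,\tau_2,\tau_3$ be three distinct elements of an odd class $C\subseteq S$. They exist because $C$ is not the transposition class, so its support size is at least $4$ and $|C|\ge 3$.
\item Every $\tau_i^{-1}\tau_j$ and every $\tau_i^{-1}(1\,2)$ is a nonidentity even permutation, hence lies in $S$.
\end{itemize}
So $\{e,\tau_1,\tau_2,\tau_3,(1\,2)\}$ induces $K_5$ minus an edge, a contradiction.

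With these two fixes the argument works uniformly for every $n\ge 3$, so no separate hand check of $n\le 4$ is needed.
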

\begin{proof}
The sufficiency is obvious. We prove the necessity.

Suppose that $\mathrm{Cay}(S_n, S)$ is a line graph. Then by Beineke's characterization it does not contain $H_1$ or $H_2$ (see Figure~\ref{fig:H1H2}) as an induced subgraph. We first show that
\(
A_n\setminus\{e\}\subseteq S,
\)
that is, every permutation $\sigma\in A_n$ with $|\mathrm{supp}(\sigma)|\ge 3$ belongs to $S$. We prove this statement by induction on $|\mathrm{supp}(\sigma)|$. Clearly, if $|\mathrm{supp}(\sigma)|=3$, the statement follows from our assumption that $S$ contains all 3-cycles. 

Let $\sigma \in A_n$ be such that $|\mathrm{supp}(\sigma)|=4$. Since $\sigma$ is an even permutation, without loss of generality we may assume that
\(
\sigma=(1\,2)(3\,4).
\)
Suppose, for a contradiction, that $\sigma \notin S$. Since $S$ is a union of conjugacy classes, this assumption implies that no permutation with cycle type $(2,2, 1, \ldots, 1)$ belongs to $S$. 
Let
\[
\delta=e,\qquad
\tau_1=(1\,2\,3),\qquad
\tau_2=(1\,3\,4),\qquad
\tau_3=(2\,4\,3).
\]
Then, by our assumption, $e \sim \tau_i$ for $1\le i\le 3$. A direct calculation leads to
\begin{align*}
\tau_1^{-1}\tau_2 =  (1\,4)(2\,3), \qquad
\tau_1^{-1}\tau_3 =  (1\,2)(3\,4), \qquad
\tau_2^{-1}\tau_3 =  (1\,3)(2\,4).
\end{align*}
Therefore, $\tau_1$, $\tau_2$, and $\tau_3$ are mutually nonadjacent in $\mathrm{Cay}(S_n,S)$.
Consequently, the subgraph induced by $\{e, \tau_1, \tau_2, \tau_3\}$ is isomorphic to the forbidden subgraph $H_2$ in Figure~\ref{fig:H1H2}, a contradiction. Hence $\sigma \in S$.

Assume inductively that for some $k\ge 5$ every even permutation with support size at most $k-1$ belongs to $S$. Let $\sigma\in A_n$ be such that $|\mathrm{supp}(\sigma)|=k$. We aim to show that $\sigma\in S$. If the cycle decomposition of $\sigma$ contains an odd cycle of length at least $5$, then we may assume without loss of generality that
\(
\sigma=(1\,2\,3\,\cdots\,t)\sigma'
\)
for some odd $t\ge 5$ and $\sigma' \in A_n$. Note that $t \le k$. Let
\[
\delta=e,\qquad
\tau_1=(1\,2\,3)\sigma',\qquad
\tau_2=(2\,3\,4)\sigma',\qquad
\tau_3=(1\,3\,4)\sigma',\qquad
\omega=\sigma.
\]
Then each $\tau_i$ is an even permutation with support size at most $k-2$, and hence $e\sim\tau_i$ for $1\le i\le 3$ by the induction hypothesis. Moreover, each $\tau_i^{-1}\omega$ is an even permutation with support size at most $t-1$ ($\le k-1$), and so $\tau_i \sim \sigma$ for $1\le i\le 3$. Also, each $\tau_i^{-1}\tau_j$ is an even permutation with support size at most $4$, and hence $\tau_i\sim\tau_j$ for $1\le i<j\le 3$ by the induction hypothesis. Since $\mathrm{Cay}(S_n,S)$ is a line graph, it does not contain $H_1$ as an induced subgraph. Thus, we must have $e \sim \sigma$, that is, $\sigma\in S$.

If the cycle decomposition of $\sigma$ contains an even cycle of length at least $4$, then we may assume without loss of generality that
\(
\sigma=(1\,2\,3\,\cdots\,t)\sigma'
\)
for some even $t\ge 4$ and $\sigma' \in S_n$, and we set
\[
\delta=e,\qquad
\tau_1=(1\,2)\sigma',\qquad
\tau_2=(2\,3)\sigma',\qquad
\tau_3=(1\,3)\sigma',\qquad
\omega=\sigma.
\]
If the cycle decomposition of $\sigma$ contains both a $2$-cycle and a $3$-cycle, then we may assume without loss of generality that
\(
\sigma=(1\,2\,3)(4\,5)\sigma'
\)
for some $\sigma' \in S_n$, and we set
\[
\delta=e,\qquad
\tau_1=(1\,2)\sigma',\qquad
\tau_2=(2\,3)\sigma',\qquad
\tau_3=(1\,3)\sigma',\qquad
\omega=\sigma.
\]
If the cycle decomposition of $\sigma$ contains only $3$-cycles, then since $|\mathrm{supp}(\sigma)|=k\ge 5$ we may assume without loss of generality that
\(
\sigma=(1\,2\,3)(4\,5\,6)\sigma'
\)
for some $\sigma' \in A_n$, and we set 
\[
\delta=e,\qquad
\tau_1=(1\,2)(4\,5)\sigma',\qquad
\tau_2=(2\,3)(4\,5)\sigma',\qquad
\tau_3=(1\,3)(4\,5)\sigma',\qquad
\omega=\sigma.
\]
Finally, if the cycle decomposition of $\sigma$ contains only $2$-cycles, then since $|\mathrm{supp}(\sigma)|=k\ge 5$ we may assume without loss of generality that
\(
\sigma=(1\,2)(3\,4)(5\,6)\sigma'
\)
for $\sigma' \in S_n$, and we set
\[
\delta=e,\qquad
\tau_1=(1\,2)\sigma',\qquad
\tau_2=(3\,4)\sigma',\qquad
\tau_3=(5\,6)\sigma',\qquad
\omega=\sigma.
\]
In each case, arguing exactly as in the previous paragraph, we obtain that $\sigma\in S$.

So, by mathematical induction, we have proved that
\(
A_n\setminus\{e\}\subseteq S.
\)
To prove 
\(
S=A_n\setminus\{e\},
\)
it suffices to show that no odd permutation $\sigma\in S_n$ with $|\mathrm{supp}(\sigma)|\ge 4$ belongs to $S$. Assume for a contradiction that $\sigma\in S$ is odd and $|\mathrm{supp}(\sigma)|\ge 4$. Since $|\mathrm{supp}(\sigma)|\ge 4$, we can find two distinct permutations $\sigma_1$ and $\sigma_2$ in the conjugacy class $C(S_n,\sigma)$, both different from $\sigma$. Since $\sigma\in S$ and $S$ is a normal set, we have $\sigma_1, \sigma_2\in S$.
Let
\[
\delta=e,\qquad
\tau_1=\sigma,\qquad
\tau_2=\sigma_1,\qquad
\tau_3=\sigma_2,\qquad
\omega=(1\,2).
\]
Then $\delta\sim\tau_i$ for $1\le i\le 3$. Moreover, each $\tau_i^{-1}\omega$ is an even permutation, and hence $\tau_i\sim\omega$ for $1\le i\le 3$. Also, each $\tau_i^{-1}\tau_j$ is an even permutation, and hence $\tau_i\sim\tau_j$ for $1\le i<j\le 3$. On the other hand, since by our assumption the transposition $(1\,2)$ does not belong to $S$, $\delta$ and $\omega$ are not adjacent in $\mathrm{Cay}(S_n,S)$. Therefore, the subgraph of $\mathrm{Cay}(S_n,S)$ induced by $\{\delta,\tau_1,\tau_2,\tau_3,\omega\}$ is isomorphic to $H_1$, a contradiction. Therefore, 
\(
S=A_n\setminus\{e\}
\)
and consequently $\mathrm{Cay}(S_n,S) \cong 2 K_{n!/2}$. 
\end{proof}

We are now ready to prove Theorem~\ref{class-linegraph}.

\begin{proof}[Proof of Theorem~\ref{class-linegraph}]
The sufficiency is obvious, so it remains to prove the necessity.

Suppose that $S$ is a nonempty normal set in $S_n$ such that $\mathrm{Cay}(S_n,S)$ is a line graph. If $S$ contains the conjugacy class of transpositions, then by Lemma~\ref{2-cycle-linegraph}, $S=S_n\setminus\{e\}$, and hence $\mathrm{Cay}(S_n,S)\cong K_{n!}$. If $S$ does not contain the conjugacy class of transpositions but contains the conjugacy class of $3$-cycles, then by Lemma~\ref{3-cycle-linegraph}, $S=A_n\setminus\{e\}$, and hence $\mathrm{Cay}(S_n,S)\cong 2K_{n!/2}$. Therefore, we may assume in the sequel that $|\mathrm{supp}(\sigma)|\ge 4$ for every $\sigma\in S$. 

If $S$ contains an element $\sigma$ whose cycle decomposition contains a cycle of length at least $4$, then we may assume without loss of generality that
\(
\sigma=(1\,2\,3\,4\,\cdots\,t)\sigma'
\)
for some $t\ge 4$ and $\sigma' \in S_n$. Let
\[
\delta=e,\qquad
\tau_1=(1\,2\,3\,4\,\cdots\,t)\sigma',\qquad
\tau_2=(1\,3\,4\,\cdots\,t\,2)\sigma',\qquad
\tau_3=(1\,3\,2\,4\,\cdots\,t)\sigma'.
\]
Since $\tau_1$, $\tau_2$, and $\tau_3$ all lie in the conjugacy class of $\sigma$, we have $\tau_i\in S$ for $1\le i\le 3$, and hence $\delta\sim\tau_i$ for $1\le i\le 3$. Moreover, each $\tau_i^{-1}\tau_j$ is a $3$-cycle, and hence by our assumption $\tau_i$ and $\tau_j$ are not adjacent in $\mathrm{Cay}(S_n,S)$ for $1\le i<j\le 3$. Therefore, the subgraph induced by $\{\delta,\tau_1,\tau_2,\tau_3\}$ is isomorphic to $H_2$ in Figure~\ref{fig:H1H2}, but this contradicts our assumption that $\mathrm{Cay}(S_n,S)$ is a line graph.

If $S$ contains an element $\sigma$ whose cycle decomposition contains both a $2$-cycle and a $3$-cycle, then we may assume without loss of generality that
\(
\sigma=(1\,2)(3\,4\,5)\sigma'
\)
for some $\sigma' \in S_n$. Let
\[
\delta=e,\qquad
\tau_1=(1\,2)(3\,4\,5)\sigma',\qquad
\tau_2=(1\,2)(3\,5\,4)\sigma',\qquad
\tau_3=(3\,5)(1\,4\,2)\sigma'.
\]
Arguing as above, we have $\delta\sim\tau_i$ for $1\le i\le 3$ and $\tau_i$ and $\tau_j$ are not adjacent in $\mathrm{Cay}(S_n,S)$ for $1\le i<j\le 3$. Therefore, the subgraph of $\mathrm{Cay}(S_n,S)$ induced by $\{\delta,\tau_1,\tau_2,\tau_3\}$ is isomorphic to $H_2$ in Figure~\ref{fig:H1H2}, contradicting the assumption that $\mathrm{Cay}(S_n,S)$ is a line graph.

If $S$ contains an element $\sigma$ whose cycle decomposition contains only $3$-cycles, then, since $|\mathrm{supp}(\sigma)|\ge 4$, we may assume without loss of generality that
\(
\sigma=(1\,2\,3)(4\,5\,6)\sigma'
\)
for some $\sigma' \in S_n$. Let
\[
\delta=e,\qquad
\tau_1=(1\,2\,3)(4\,5\,6)\sigma',\qquad
\tau_2=(1\,6\,2)(3\,4\,5)\sigma',\qquad
\tau_3=(1\,4\,2)(3\,5\,6)\sigma'.
\]
Then $\delta\sim\tau_i$ for $1\le i\le 3$. Moreover, each $\tau_i^{-1}\tau_j$ is a $5$-cycle. Since we have already shown that no $5$-cycle lies in $S$, it follows that $\tau_i$ and $\tau_j$ are not adjacent in $\mathrm{Cay}(S_n,S)$ for $1\le i<j\le 3$. Therefore, the subgraph of $\mathrm{Cay}(S_n,S)$ induced by $\{\delta,\tau_1,\tau_2,\tau_3\}$ is isomorphic to $H_2$ in Figure~\ref{fig:H1H2}, a contradiction.

If $n\ge 5$ and $S$ contains an element $\sigma$ with at least one fixed point whose cycle decomposition contains only $2$-cycles, then, since $|\mathrm{supp}(\sigma)|\ge 4$, we may assume without loss of generality that $5 \notin \mathrm{supp}(\sigma)$ and $\sigma=(1\,2)(3\,4)\sigma'$ for some $\sigma' \in S_n$. Let
$$
\delta=e,\qquad
\tau_1=(1\,2)(3\,4)\sigma',\qquad
\tau_2=(1\,5)(3\,4)\sigma',\qquad
\tau_3=(2\,5)(3\,4)\sigma'.
$$
Arguing as above, we have $\delta\sim\tau_i$ for $1\le i\le 3$ and $\tau_i$ and $\tau_j$ are not adjacent in $\mathrm{Cay}(S_n,S)$ for $1\le i<j\le 3$. Therefore, the subgraph of $\mathrm{Cay}(S_n,S)$ induced by $\{\delta,\tau_1,\tau_2,\tau_3\}$ is isomorphic to $H_2$ in Figure~\ref{fig:H1H2}, and hence $\mathrm{Cay}(S_n,S)$ cannot be a line graph, a contradiction.

If $n\ge 5$ and $S$ contains an element $\sigma$ with no fixed point whose cycle decomposition consists only of $2$-cycles, then, since $n\ge 5$, we may assume without loss of generality that
$\sigma=(1\,2)(3\,4)(5\,6)\sigma'$ for some $\sigma' \in S_n$. Let
$$
\delta=e,\qquad
\tau_1=(1\,2)(3\,4)(5\,6)\sigma',\qquad
\tau_2=(1\,3)(2\,5)(4\,6)\sigma',\qquad
\tau_3=(1\,4)(2\,6)(3\,5)\sigma'.
$$
Then $\delta\sim\tau_i$ for $1\le i\le 3$. Moreover, the cycle decomposition of each $\tau_i^{-1}\tau_j$ consists of two $3$-cycles for $1\le i<j\le 3$. Since we have already shown that no element of $S$ contains a $3$-cycle, it follows that $\tau_i$ and $\tau_j$ are not adjacent in $\mathrm{Cay}(S_n,S)$ for $1\le i<j\le 3$. Therefore, the subgraph of $\mathrm{Cay}(S_n,S)$ induced by $\{\delta,\tau_1,\tau_2,\tau_3\}$ is isomorphic to $H_2$ in Figure~\ref{fig:H1H2}, and hence $\mathrm{Cay}(S_n,S)$ cannot be a line graph, a contradiction.

Summing up, the only remaining case is that $n=4$ and $S$ contains an element $\sigma$ whose cycle decomposition contains only $2$-cycles. In this case, we have $S=\{(1\,2)(3\,4), \allowbreak (1\,3)(2\,4), \allowbreak (1\,4)(2\,3)\}$,  and hence $\mathrm{Cay}(S_n,S)\cong 6K_4$. This completes the proof.
\end{proof}

\subsection{Normal Cayley graphs on $S_n$ with strictly second largest eigenvalue at most $1$}

A \emph{cocktail party graph} is a graph obtained from a complete graph with even order by deleting a perfect matching. In 2010, Stani\'{c} \cite{Stanic2010} obtained the following complete classification of connected regular graphs with the second largest eigenvalue at most $1$.

\begin{thm}[cf.~{\cite[Theorem 3.1]{Stanic2010}}]\label{second-ev-1}
	Every connected regular graph with the second largest eigenvalue at most $1$ is the complement of a (not necessarily connected) regular graph, where each connected component is either a connected regular line graph, a cocktail party graph, or one of the 187 exceptional connected regular graphs with order from 8 to 28 and degree from 3 to 16 described in \cite[pp.\,213--227]{CRS2004}.
\end{thm}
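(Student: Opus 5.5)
The plan is to pass to the complement and reduce the statement to the classical description of graphs with least eigenvalue at least $-2$.

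Let $\Gamma$ be a connected $d$-regular graph of order $N$ with $\lambda_2(\Gamma)\le 1$, and let $\overline{\Gamma}$ be its complement. Then $A(\overline{\Gamma})=J-I-A(\Gamma)$. Because $\Gamma$ is regular, the all-ones vector $\mathbf{1}$ is a common eigenvector of $A(\Gamma)$ and $J$, so $A(\Gamma)$ and $J$ can be diagonalised simultaneously. On $\mathbf{1}$ the matrix $A(\overline{\Gamma})$ has eigenvalue $N-1-d\ge 0$. On $\mathbf{1}^{\perp}$ its eigenvalues are $-1-\lambda_i(\Gamma)$ for $i\ge 2$. The hypothesis $\lambda_i(\Gamma)\le 1$ for $i\ge 2$ then gives $\lambda_{\min}(\overline{\Gamma})\ge -2$.

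Next I pass to components. The complement $\overline{\Gamma}$ is $(N-1-d)$-regular, and it may be disconnected. Its spectrum is the union of the spectra of its connected components, so every component is a connected regular graph with least eigenvalue at least $-2$. The converse direction is not needed, since the statement is only a necessary form.

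I would then invoke the structure theory for graphs with least eigenvalue at least $-2$, due to Cameron, Goethals, Seidel and Shult. Such a connected graph has $A+2I$ positive semidefinite, so it is the Gram matrix of a set of norm-$2$ vectors spanning a root lattice. That lattice is of type $A_n$, $D_n$, or lies in $E_8$. In the $A_n$ and $D_n$ cases the graph is a generalized line graph $L(H;a_1,\dots,a_k)$. Otherwise it is exceptional: it is represented in $E_8$, so it has at most $36$ vertices, and there are finitely many such graphs.

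Regularity then cuts these cases down.

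\begin{itemize}
\item \emph{Generalized line graphs.} A regular connected generalized line graph with at least one pendant "petal" ($a_i>0$) is forced to be a cocktail party graph. When all $a_i=0$ it is an ordinary line graph. I would check this by comparing vertex degrees in $L(H;a_1,\dots,a_k)$: vertices coming from petals and from ordinary edges of $H$ have degrees that can only agree in the cocktail party configuration.
\item \emph{Exceptional graphs.} The regular exceptional graphs are exactly the $187$ graphs tabulated in \cite[pp.\,213--227]{CRS2004}, which fixes their orders ($8$ to $28$) and degrees ($3$ to $16$).
\end{itemize}

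Putting these together, each component of $\overline{\Gamma}$ is a regular line graph, a cocktail party graph, or one of the $187$ exceptions, which is the claimed description.

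The main obstacle is the exceptional case. Showing that the root-system embedding leaves only finitely many regular exceptional graphs is conceptual. Producing the exact list of $187$ is an exhaustive computer enumeration inside $E_8$, and I would not redo it; I would cite \cite{CRS2004} for it, exactly as the statement does.
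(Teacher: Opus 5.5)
Your proposal is correct and follows essentially the same route as the cited source. The paper gives no proof of its own but imports Stani\'c's Theorem~3.1, which rests on exactly your complementation step (for a connected regular $\Gamma$, $\lambda_2(\Gamma)\le 1$ forces every eigenvalue of $\overline{\Gamma}$ to be at least $-2$) combined with the known description of connected regular graphs with least eigenvalue at least $-2$ as regular line graphs, cocktail party graphs, or the $187$ regular exceptional graphs. Your degree check for petals is also right: a petal edge at $i$ has degree $\deg_H(i)+2a_i-2$, strictly smaller than that of any ordinary edge $iv$, so $a_i>0$ forces $i$ to be isolated in $H$ and the graph to be $L(K_1;a_i)$, a cocktail party graph. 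As in the source, the list of $187$ exceptional graphs is quoted rather than rederived.
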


Using Theorems~\ref{second-ev-1} and \ref{class-linegraph}, we obtain the following classification of normal Cayley graphs on $S_n$ with strictly second largest eigenvalue at most $1$.

\begin{thm}\label{sn-cay-sec-1}
	Let $n\ge 5$, and let $S$ be a nonempty normal set in $S_n$. Then the strictly second largest eigenvalue of $\mathrm{Cay}(S_n,S)$ is at most $1$ if and only if one of the following holds:
	\begin{enumerate}[\rm (a)]
		\item $S=S_n\setminus\{e\}$, and hence $\mathrm{Cay}(S_n,S)\cong K_{n!}$;
		\item $S=S_n\setminus \, A_n$, and hence $\mathrm{Cay}(S_n,S)\cong K_{n!/2,n!/2}$;
		\item $S=A_n\setminus\{e\}$, and hence $\mathrm{Cay}(S_n,S)\cong 2K_{n!/2}$.
	\end{enumerate}
\end{thm}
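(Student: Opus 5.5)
Sufficiency is a direct spectrum check. $K_{n!}$ has strictly second largest eigenvalue $-1$. $K_{n!/2,n!/2}$ has $\lambda_2=0$. $2K_{n!/2}$, which has $t=2$ components, has $\lambda_3=-1$. All of these are at most $1$.

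For necessity, I split according to $\langle S\rangle$. Since $n\ge5$ and $S$ is normal, $\langle S\rangle\in\{A_n,S_n\}$.

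\textbf{Case $\langle S\rangle=S_n$.} Here $\Gamma=\mathrm{Cay}(S_n,S)$ is connected and regular with $\lambda_2(\Gamma)\le1$. By Theorem~\ref{second-ev-1}, the complement $\overline{\Gamma}=\mathrm{Cay}(S_n,S^c)$ is a regular graph each of whose components is one of three kinds: a connected regular line graph, a cocktail party graph, or one of the exceptional graphs of order at most $28$. If $S^c=\emptyset$ we are in case (a), so assume $S^c\neq\emptyset$. Then $\overline{\Gamma}$ is a normal Cayley graph, so its components are all isomorphic to $\mathrm{Cay}(\langle S^c\rangle,S^c)$, which has order $n!$ or $n!/2\ge60$. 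This rules out the exceptional graphs.

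If the components are line graphs, then $\overline{\Gamma}$ itself is a line graph, since a disjoint union of line graphs is a line graph. Theorem~\ref{class-linegraph} with $n\ge5$ then gives $S^c=S_n\setminus\{e\}$ or $A_n\setminus\{e\}$. The first contradicts $S\ne\emptyset$, and the second gives $S=S_n\setminus A_n$, which is case (b).

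If the components are cocktail party graphs $CP(m)$, I use a local argument. In $CP(m)$ every vertex has exactly one non-neighbour besides itself within its component. The non-neighbours of $e$ inside $\langle S^c\rangle$ are the elements of $\langle S^c\rangle\setminus(S^c\cup\{e\})$, so that set is a single element $g$. Since $S^c$ is normal, this set is conjugation-invariant, so $g$ is central in $\langle S^c\rangle$. But $A_n$ and $S_n$ have trivial centre for $n\ge5$, a contradiction.

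\textbf{Case $\langle S\rangle=A_n$.} I expect this case to be the main obstacle, because Theorem~\ref{second-ev-1} requires connectedness. I restrict to one component $\mathrm{Cay}(A_n,S)$. Its eigenvalues are sums of normalized characters of $A_n$ over $S$, and every such eigenvalue already appears in $\mathrm{Cay}(S_n,S)$. So the strictly second largest eigenvalue condition passes to the connected regular graph $\Delta=\mathrm{Cay}(A_n,S)$, giving $\lambda_2(\Delta)\le1$.

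Now apply Theorem~\ref{second-ev-1} to $\Delta$. Its complement is $\mathrm{Cay}(A_n,(A_n\setminus\{e\})\setminus S)$, a Cayley graph on $A_n$ whose connection set is $S_n$-conjugation invariant. If that connection set is empty, then $S=A_n\setminus\{e\}$, which is case (c). Otherwise it generates $A_n$, so its components are large and the exceptional graphs are again excluded. The cocktail party option is excluded by the same centre argument, since $Z(A_n)=1$.

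That leaves the line-graph option. Here I would re-run the Beineke arguments inside $A_n$: the constructions in Lemma~\ref{3-cycle-linegraph} and in the proof of Theorem~\ref{class-linegraph} use only even permutations in the relevant subcases. The aim is to conclude that an $S_n$-normal subset $T\subseteq A_n\setminus\{e\}$ for which $\mathrm{Cay}(A_n,T)$ is a line graph must be $A_n\setminus\{e\}$, which contradicts $S\neq\emptyset$.

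A cleaner route I would try first avoids redoing that work. The graph $\mathrm{Cay}(S_n,T)$ is just two disjoint copies of $\mathrm{Cay}(A_n,T)$, so it is a line graph exactly when $\mathrm{Cay}(A_n,T)$ is. Then Theorem~\ref{class-linegraph} applies directly and forces $T=A_n\setminus\{e\}$, i.e.\ $S=\emptyset$, a contradiction. This completes the classification.
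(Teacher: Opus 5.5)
Your proposal is correct and follows essentially the same route as the paper: you split by $\langle S\rangle\in\{A_n,S_n\}$, apply Stani\'c's theorem to the complement of $\mathrm{Cay}(S_n,S)$ (respectively of the component $\mathrm{Cay}(A_n,S)$), exclude the exceptional graphs by order and cocktail party graphs via the trivial centre, and finish with Theorem~\ref{class-linegraph}. In the disconnected case, your ``cleaner route'' of passing to $\mathrm{Cay}(S_n,T)\cong 2\,\mathrm{Cay}(A_n,T)$ is exactly what the paper does, so re-running the Beineke constructions inside $A_n$ is unnecessary.
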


\begin{proof}
Let $n\ge 5$ and $S$ be a nonempty normal set in $S_n$. If $S=S_n\setminus\{e\}, S_n\setminus A_n$ or $A_n\setminus\{e\}$, then $\mathrm{Cay}(S_n,S)\cong K_{n!}, K_{n!/2,n!/2}$ or $2K_{n!/2}$, respectively. Note that $K_{n!}$ has distinct eigenvalues $n!-1$ and $-1$, $K_{n!/2,n!/2}$ has distinct eigenvalues  $n!/2$, $0$ and $-n!/2$, and $2K_{n!/2}$ has distinct eigenvalues $n!/2-1$ and $-1$. In all three cases, the strictly second largest eigenvalue of $\mathrm{Cay}(S_n,S)$ is at most $1$. This proves the sufficiency. 

We now prove the necessity. Suppose that the strictly second largest eigenvalue of $\mathrm{Cay}(S_n,S)$ is at most $1$. 

\smallskip
\textsf{Case 1.} $\mathrm{Cay}(S_n,S)$ is connected.
\smallskip

In this case, we have $\langle S \rangle = S_n$. Since $S$ is a normal set in $S_n$, so is $S^c=(S_n\setminus\{e\})\setminus S$, and hence $\langle S^c\rangle$ is a normal subgroup of $S_n$. As $n \ge 5$, we have $\langle S^c\rangle = \{e\},A_n$ or $S_n$.
If $\langle S^c\rangle=\{e\}$, then $S^c=\emptyset$, so $S=S_n\setminus\{e\}$ and  $\mathrm{Cay}(S_n,S)\cong K_{n!}$.

Assume that $\langle S^c\rangle$ is either $A_n$ or $S_n$. Since $n\ge 5$, every connected component of $\mathrm{Cay}(S_n,S^c)$ has at least $5!/2=60$ vertices. Since $\lambda_2(\mathrm{Cay}(S_n,S))\le 1$, by Theorem~\ref{second-ev-1}, each connected component of $\mathrm{Cay}(S_n,S^c)$ is either a regular line graph, a cocktail party graph, or one of the 187 exceptional connected regular graphs. But all these exceptional graphs have at most $28$ vertices, so none of them can be a connected component of $\mathrm{Cay}(S_n,S^c)$.

Suppose that some connected component of $\mathrm{Cay}(S_n,S^c)$ is a cocktail party graph. Then the complement of the subgraph of $\mathrm{Cay}(S_n,S^c)$ induced by $\langle S^c\rangle$ is $1$-regular. Hence $S^c$ omits exactly one nonidentity element of $\langle S^c\rangle$. Since $S^c$ is a union of conjugacy classes of $S_n$, this would give a conjugacy class of size $1$ in $S_n$ other than $\{e\}$, contradicting the fact that the center of $S_n$ is trivial for $n\ge 5$. Therefore, no connected component of $\mathrm{Cay}(S_n,S^c)$ is a cocktail party graph.

Thus every connected component of $\mathrm{Cay}(S_n,S^c)$ is a regular line graph. Since a disjoint union of line graphs is again a line graph, $\mathrm{Cay}(S_n,S^c)$ is itself a line graph. It follows from Theorem~\ref{class-linegraph} that either $S^c=S_n\setminus\{e\}$ or $S^c=A_n\setminus\{e\}$. The former is impossible since it would imply $S=\emptyset$. Thus, the only possibility is that $S^c=A_n\setminus\{e\}$. Hence $S=S_n\setminus A_n$ and therefore $\mathrm{Cay}(S_n,S)\cong K_{n!/2,n!/2}$.

\smallskip
\textsf{Case 2.} $\mathrm{Cay}(S_n,S)$ is disconnected.
\smallskip

In this case, we have $\langle S\rangle=A_n$. Set
\(
S^*=(A_n\setminus\{e\})\setminus S.
\)
Then $\mathrm{Cay}(A_n,S^*)$ is the complement of $\mathrm{Cay}(A_n,S)$. If $S^*=\emptyset$, then $S=A_n\setminus\{e\}$, and hence $\mathrm{Cay}(S_n,S)\cong 2K_{n!/2}$.

Assume that $S^*\neq\emptyset$. Since $S$ is a normal set in $S_n$, $S^*$ is a union of conjugacy classes of both $S_n$ and $A_n$. Since $A_n$ is simple for $n\ge 5$, the nonempty normal set $S^*$ generates $A_n$, and hence $\mathrm{Cay}(A_n,S^*)$ is connected.
As $\lambda_2(\mathrm{Cay}(A_n,S))\le 1$, Theorem~\ref{second-ev-1} implies that $\mathrm{Cay}(A_n,S^*)$ is either a regular line graph, a cocktail party graph, or one of the 187 exceptional connected regular graphs. The same arguments as in Case 1 show that it is neither a cocktail party graph nor an exceptional graph. Hence $\mathrm{Cay}(A_n,S^*)$ must be a line graph. Therefore, $\mathrm{Cay}(S_n,S^*)$, which is the disjoint union of two copies of $\mathrm{Cay}(A_n,S^*)$, is also a line graph.
Moreover, $S^*$ is a union of conjugacy classes of $S_n$. Therefore, by Theorem~\ref{class-linegraph}, we have either $S^*=S_n\setminus\{e\}$ or $S^*=A_n\setminus\{e\}$. Since $S^*\subseteq A_n$, it follows that $S^*=A_n\setminus\{e\}$. This forces $S=\emptyset$, contradicting the fact that $\langle S\rangle=A_n$.
\end{proof}

\section{Proofs of Theorem~\ref{thm:main2} and Corollary \ref{cor:main2}}
\label{supp-large}

With the preparation in the previous section, we can finally prove Theorem~\ref{thm:main2} and Corollary \ref{cor:main2}.

\begin{proof}[Proof of Theorem~\ref{thm:main2}]
Let $n\ge 5$, and let $S$ be a nonempty normal set in $S_n$. Let $A_S, B_S$ and $C_S$ be as in \eqref{eq:ABCS}. Then $S=A_S \cup B_S \cup C_S$. Since $\mathrm{Cay}(S_n, S)$ is normal, by \eqref{eq:eigen-chara} and Table~\ref{tab:tab1}, its eigenvalue  corresponding to the partition $(n-1,1)$ is
\begin{align}
\lambda_{(n-1,1)}
& = \sum_{\sigma \in S} \tilde{\chi}_{(n-1,1)}(\sigma) \nonumber \\ 
& = \frac{1}{n-1} \sum_{\sigma \in S} (|\fix(\sigma)|-1) \nonumber \\ 
& = \frac{1}{n-1} \left(\sum_{\sigma \in A_S} (|\fix(\sigma)|-1) - \sum_{\sigma \in C_S} 1\right) \nonumber \\ 
& = \frac{1}{n-1} \left(\left(\sum_{\sigma \in A_S} |\fix(\sigma)|\right) - (|A_S|+|C_S|)\right). \label{eq:abc}
\end{align}

By our assumption \eqref{eq:ABC} and equation \eqref{eq:abc}, we have $\lambda_{(n-1,1)}\le 1$.

\smallskip
\textsf{Necessity.}
Suppose that $\mathrm{Cay}(S_n,S)$ has the Aldous property. Then its strictly second largest eigenvalue, which is attained by the standard representation $\rho_{(n-1,1)}$, is $\lambda_{(n-1,1)}\le 1$.
By Theorem~\ref{sn-cay-sec-1}, we have $S \in \{S_n\setminus\{e\}, S_n\setminus A_n, A_n\setminus\{e\}\}$.

\smallskip
\textsf{Sufficiency.}
If $S=S_n\setminus\{e\}$, then $\mathrm{Cay}(S_n,S)\cong K_{n!}$ has spectrum
\(
\{(n!-1)^1,(-1)^{n!-1}\}.
\)
The eigenvalue $n!-1$ is attained by $\rho_{(n)}$, while the strictly second largest eigenvalue $-1$ is attained by $\rho_{(n-1,1)}$.

If $S=S_n\setminus A_n$, then $\mathrm{Cay}(S_n,S)\cong K_{n!/2,n!/2}$ has spectrum
\(
\left\{\left(\frac{n!}{2}\right)^1,0^{n!-2},\left(-\frac{n!}{2}\right)^1\right\}.
\)
The eigenvalue $\frac{n!}{2}$ is attained by $\rho_{(n)}$, the eigenvalue $-\frac{n!}{2}$ is attained by $\rho_{(1^n)}$, and the strictly second largest eigenvalue $0$ is attained by $\rho_{(n-1,1)}$.

If $S=A_n\setminus\{e\}$, then $\mathrm{Cay}(S_n,S)\cong 2K_{n!/2}$ has spectrum
\(
\left\{\left(\frac{n!}{2}-1\right)^2,(-1)^{n!-2}\right\}.
\)
The eigenvalue $\frac{n!}{2}-1$ is attained by both $\rho_{(n)}$ and $\rho_{(1^n)}$, while the strictly second largest eigenvalue $-1$ is attained by $\rho_{(n-1,1)}$.

Therefore, in all three cases, $\mathrm{Cay}(S_n,S)$ has the Aldous property. 
Moreover, since in each case the strictly second largest eigenvalue is $\lambda_{(n-1,1)}\leq 1$, it follows from \eqref{eq:abc} that all three sets $S$ satisfy \eqref{eq:ABC}.

This proves the sufficiency and completes the proof.
\end{proof}

\begin{proof}[Proof of Corollary \ref{cor:main2}]
Let $n\ge 5$, and let $S$ be a nonempty normal set in $S_n$.
	
(a) Suppose that $S \subseteq \mathrm{Supp}_n(\{n-1,n\})$. Then $A_S = \emptyset$, so $S$ satisfies condition \eqref{eq:ABC}. Hence Theorem~\ref{thm:main2} applies to $\mathrm{Cay}(S_n,S)$. Since $n\ge 5$, each of $S_n\setminus\{e\}, S_n\setminus A_n$ and $A_n\setminus\{e\}$ contains an element whose support size is neither $n-1$ nor $n$. Thus, $S \notin \{S_n\setminus\{e\}, S_n\setminus A_n, A_n\setminus\{e\}\}$ and therefore, by Theorem \ref{thm:main2}, $\mathrm{Cay}(S_n,S)$ does not have the Aldous property.  

(b) Suppose that $|\overline{A}_S|-|\overline{C}_S|+2(n-1) \ge 0$ and $S \notin \{S_n\setminus\{e\}, S_n\setminus A_n, A_n\setminus\{e\}\}$. By \eqref{eq:ABCS} and \eqref{eq:barABCS}, $\{A_S, \overline{A}_S, B_S, \overline{B}_S, C_S, \overline{C}_S\}$ is a partition of $S_{n} \setminus \{e\}$. Hence
\begin{equation}
\label{sum-set-sn}
|A_S|+|\overline{A}_S|+|B_S|+|\overline{B}_S|+|C_S|+|\overline{C}_S| = n!-1. 
\end{equation}
Set 
$$
\mathcal{X} = \{ (\sigma, i) \in S_n \times [n] : \sigma(i) = i \}.
$$
We use double counting to enumerate $\mathcal{X}$. First, for each fixed $i$, there are exactly $(n-1)!$ permutations $\sigma \in S_n$ satisfying $\sigma(i) = i$. Therefore,
\begin{equation}
\label{eq:fac}
|\mathcal{X}| = \sum_{i=1}^n |\{ \sigma \in S_n : \sigma(i) = i \}| = \sum_{i=1}^n (n-1)! = n!.
\end{equation}
Second, each permutation $\sigma$ contributes exactly $|\fix(\sigma)|$ to the cardinality of $\mathcal{X}$. Thus,
\begin{align*}
	|\mathcal{X}| 
	&= 
	\sum_{\sigma \in S_n} |\fix(\sigma)|  \\
	&= 
	\sum_{k=0}^n \sum_{\sigma \in \mathrm{Supp}_n(k)} |\fix(\sigma)|   \\
	&=
	\sum_{\sigma \in \mathrm{Supp}_n(0)} |\fix(\sigma)|
	+
	\sum_{\sigma \in \mathrm{Supp}_n(1)} |\fix(\sigma)|
	+
	\sum_{k=2}^{n-1} \sum_{\sigma \in \mathrm{Supp}_n(k)} |\fix(\sigma)|
	+
	\sum_{\sigma \in \mathrm{Supp}_n(n)} |\fix(\sigma)|  \\
	&=
	n+0+
	\sum_{k=2}^{n-1} \sum_{\sigma \in \mathrm{Supp}_n(k)} |\fix(\sigma)|
	+0  \\
	&=
	n+\sum_{\sigma \in \mathrm{Supp}_n(\{2, 3, \ldots, n-2\})} |\fix(\sigma)|+\sum_{\sigma \in \mathrm{Supp}_n(n-1)} |\fix(\sigma)|.  
\end{align*}
This together with \eqref{sum-set-sn} and \eqref{eq:fac} yields
\begin{align}
\sum_{\sigma \in A_S} |\fix(\sigma)| + \sum_{\sigma \in \overline{A}_S} |\fix(\sigma)|
&= \sum_{\sigma \in \mathrm{Supp}_n(\{2, 3, \ldots, n-2\})} |\fix(\sigma)| \nonumber \\
&= n! - n - \sum_{\sigma \in \mathrm{Supp}_n(n-1)} |\fix(\sigma)| \label{eq:fac1} \\
&= n! - n - \sum_{\sigma \in B_S} |\fix(\sigma)| - \sum_{\sigma \in \overline{B}_S} |\fix(\sigma)| \nonumber \\ 
&= n! - n - |B_S| - |\overline{B}_S| \nonumber \\ 
&= |A_S| + |\overline{A}_S| + |C_S| + |\overline{C}_S| + 1 - n. \nonumber 
\end{align}
Since $|\overline{A}_S|-|\overline{C}_S|+2(n-1) \ge 0$ by our assumption, it follows that
\begin{align*}
|A_S| + |C_S| + n - 1 - \sum_{\sigma \in A_S} |\fix(\sigma)| 
&= \sum_{\sigma \in \overline{A}_S} |\fix(\sigma)|-|\overline{A}_S|-|\overline{C}_S|+2(n-1) \\
&= \sum_{\sigma \in \overline{A}_S} (|\fix(\sigma)|-1)-|\overline{C}_S|+2(n-1) \\
& \geq |\overline{A}_S|-|\overline{C}_S|+2(n-1) \\
& \geq 0.
\end{align*}
That is, $S$ satisfies condition \eqref{eq:ABC}. Since $S \notin \{S_n\setminus\{e\}, S_n\setminus A_n, A_n\setminus\{e\}\}$ by our assumption, it follows from Theorem \ref{thm:main2} that $\mathrm{Cay}(S_n,S)$ does not have the Aldous property.

(c) Suppose that $\mathrm{Supp}_n(n)\subseteq S$ and $S \neq S_n\setminus\{e\}$. Since $\mathrm{Supp}_n(n)$ contains both even and odd permutations, $S$ is neither $S_n\setminus A_n$ nor $A_n\setminus\{e\}$. 
Moreover, $C_S = \mathrm{Supp}_n(n)$ and hence $\overline{C}_S = \emptyset$. Thus, $|\overline{A}_S|-|\overline{C}_S|+2(n-1) = |\overline{A}_S|+2(n-1) \ge 0$ and therefore by (b), $\mathrm{Cay}(S_n,S)$ does not have the Aldous property. 
\end{proof}

\medskip
\noindent \textbf{Acknowledgements}~~Chenhui Lv was supported by the Outstanding Doctoral Students Overseas Study Program of the University of Science and Technology of China. Sanming Zhou was supported by a Discovery Project (DP250104965) of the Australian Research Council.

\end{document}